\title{The role of domination and smoothing conditions in the theory of
  eventually positive semigroups}
\author[1]{Daniel Daners}%
\author[2]{Jochen Gl\"uck\thanks{Partially supported by a scholarship within the
    scope of the LGFG Baden-W\"urttemberg, Germany.}}%
\affil[1]{School of Mathematics and Statistics, University of Sydney,
  NSW 2006, Australia\authorcr%
  \nolinkurl{daniel.daners@sydney.edu.au}}%
\affil[2]{Institut f\"ur Angewandte Analysis, Universit\"at Ulm,
  D-89069 Ulm, Germany\authorcr%
  \nolinkurl{jochen.glueck@uni-ulm.de}}%
\date{\today}
\newtheorem{theorem}{Theorem}[section]
\newtheorem{lemma}[theorem]{Lemma}
\newtheorem{corollary}[theorem]{Corollary}
\theoremstyle{definition}
\newtheorem*{definition*}{Definition}
\theoremstyle{remark}
\numberwithin{equation}{section}
\DeclareMathOperator{\im}{im}
\DeclareMathOperator{\spb}{s}
\DeclareMathOperator{\spec}{\sigma}
\DeclareMathOperator{\resSet}{\rho}
\DeclareMathOperator{\Res}{\mathcal{R}}
\DeclareMathOperator{\repart}{Re}
\DeclareMathOperator{\per}{per}
\DeclareMathOperator{\dist}{dist}
\newcommand{\bbN}{\mathbb{N}}
\newcommand{\bbR}{\mathbb{R}}
\newcommand{\calL}{\mathcal{L}}
\newcommand{\phdot}{\mathord{\,\cdot\,}}
\let\oldthebibliography\thebibliography
\renewcommand\thebibliography[1]{
  \oldthebibliography{#1}
  \setlength{\parskip}{0pt}
  \setlength{\itemsep}{0pt plus 0.3ex}
  \small
}
\begin{document}
\maketitle

\begin{abstract}
  We perform an in-depth study of some domination and smoothing
  properties of linear operators and of their role within the theory of 
  eventually positive operator semigroups. On the one hand we prove that, on many
  important function spaces, they imply compactness properties. On the
  other hand, we show that these conditions can be omitted in a number
  of Perron--Frobenius type spectral theorems.  We furthermore prove a
  Kre\u{\i}n--Rutman type theorem on the existence of positive
  eigenvectors and eigenfunctionals under certain eventual positivity
  conditions.
\end{abstract}

\renewcommand{\thefootnote}{}%
\footnotetext{\textbf{Mathematics Subject Classification (2010):} 47D06,
  47B65, 34G10, 47A10}%
\footnotetext{\footnotesize\textbf{Keywords:} One-parameter semigroups
  of linear operators; eventually positive semigroup; domination
  condition; smoothing condition; Perron-Frobenius theory;}%

\section{Introduction}
\label{sec:introduction}
The solution of a linear autonomous evolution equation is often
described by means of a $C_0$-semigroup on a Banach space, usually some
kind of functions space. While, in many models, one expects the solution
semigroup to be \emph{positive}, that is, solutions with positive
initial conditions remain positive, there are also examples which
exhibit a more subtle type of positive behaviour. For example, it was
noted in \cite{Ferrero2008} and \cite{Gazzola2008} that the solution
semigroup of the bi-harmonic heat equation on $\bbR^d$, while not being
positive, behaves in some sense \emph{eventually} positive. This
observation complemented earlier results on the corresponding elliptic
problem; see for instance \cite{Grunau1997, Grunau1998, Grunau1999} and
the references therein and also the recent paper \cite{Sweers2016}. A
similar phenomenon occurs for the semigroup generated by the
Dirichlet-to-Neumann operator on a two-dimensional disk as shown in
\cite{Daners2014}.

These observations suggest that a general theory of eventually positive
$C_0$-semigroups would be useful. While, in finite dimensions, such a
theory has been developed during the last decade (see for instance
\cite{Noutsos2008,Olesky2009}, \cite[Theorem~2.9]{Ellison2009} and
\cite{Erickson2015}), a systematic study of this phenomenon in infinite
dimensions was initiated only recently in
\cite{Daners2016,Daners2016a}. Several spectral results for infinite
dimensional operators with eventually positive powers were recently
proved by the second author in \cite{GlueckEPO}, after eventually
positive matrix powers had been intensively studied for at least two
decades; see the introduction of \cite{GlueckEPO} for references and
additional details.

\paragraph*{A domination and a smoothing condition}
In the present note we are mainly concerned with two conditions
appearing in various characterisation theorems in
\cite{Daners2016a}. The conditions involve the \emph{principal ideal}
$E_u$ generated by some element $u$ of the positive cone $E_+$ of a
real or complex Banach lattice $E$. That principal ideal is defined by
\begin{displaymath}
  E_u := \{f \in E\colon\exists c \ge 0 \; |f| \le cu\}.
\end{displaymath}
It is a subspace of $E$ and when equipped with the \emph{gauge norm}
$\|\cdot\|_u$ given by
\begin{equation}
  \label{eq:gauge-norm}
  \|f\|_u:=\inf\{c \ge 0\colon |f| \le cu\}
\end{equation}
a Banach lattice in its own right. We will often assume that $u\in E_+$ is a
\emph{quasi-interior point} of the positive cone, that is, a point such
that $E_u$ is dense in $E$. We refer to
\cite{Meyer-Nieberg1991,Schaefer1974} for the general theory of Banach
lattices.

First condition: Given a linear operator $A\colon E \supseteq D(A) \to
E$ we refer to
\begin{equation}
  \label{eq:domination-condition}
  D(A)\subseteq E_u \tag{Dom}
\end{equation}
as the \emph{domination condition}. It plays an important role in the
characterisation of eventually positive behaviour of the resolvent of
$A$ in \cite[Theorem~4.4]{Daners2016a}. We call this a domination
condition since for every $v\in D(A)$ it implies the existence of $c>0$
such that $|v|\leq cu$.

Second condition: If $A$ generates a $C_0$-semigroup $(e^{tA})_{t \ge
  0}$ on $E$, then we refer to
\begin{equation}
  \label{eq:smoothing-condition}
  \exists t_0 \ge 0\colon \quad e^{t_0A}E \subseteq E_u \tag{Smo}
\end{equation}
as the \emph{smoothing condition}.  This condition is an important
assumption in \cite[Theorem~5.2]{Daners2016a} which characterises
eventual positivity of $(e^{tA})_{t \ge 0}$ by means of
Perron--Frobenius like properties.  We call
\eqref{eq:smoothing-condition} a smoothing condition since in general
the gauge norm is stronger than the norm induced by on $E$, and also
because $(E_u,\|\cdot\|_u)$ is isometrically Banach lattice isomorphic
to the space of real- or complex-valued continuous functions on some
compact Hausdorff space $K$. The latter follows from the corollary to
\cite[Proposition~II.7.2]{Schaefer1974} and from Kakutani's
representation theorem for AM-spaces
\cite[Theorem~2.1.3]{Meyer-Nieberg1991}.

If $E$ is the space of real- or complex-valued continuous functions on a compact
Hausdorff space $K$, endowed with the supremum norm, then we always have
$E_u = E$. Hence, conditions~\eqref{eq:domination-condition}
and~\eqref{eq:smoothing-condition} are automatically fullfilled on such spaces.
On many other Banach lattices, however, both conditions are quite strong.
In a typical application we can think of $E$ as an
$L^p$-space over a bounded domain $\Omega \subseteq \bbR^d$ with $1 < p
< \infty$ and of $A$ as a differential operator, defined on an
appropriate Sobolev space. The vector $u$ could, for instance, be the
constant function with value $1$ in which case $E_u$ coincides with
$L^\infty(\Omega)$. In this case the domination
condition~\eqref{eq:domination-condition} means that all functions in
the domain of $A$ are bounded; it is fulfilled if an appropriate Sobolev
embedding theorem holds. The smoothing
condition~\eqref{eq:smoothing-condition} means that the semigroup
operator $e^{t_0A}$ maps every function to a bounded function, that is,
it ``smooths'' unbounded initial data in some sense, see also the
comment above.

It should be noted that, for analytic semigroups,
condition~\eqref{eq:domination-condition}
implies~\eqref{eq:smoothing-condition}; see \cite[Remark~9.3.4]{GlueckDISS} 
or the proof of
\cite[Corollary~5.3]{Daners2016a} for details and for a slightly
stronger assertion. Given the fact that the
assumptions~\eqref{eq:domination-condition}
and~\eqref{eq:smoothing-condition} are fulfilled in many applications,
they were not studied in much detail in \cite{Daners2016a}; it was
merely demonstrated in \cite[Example~5.4]{Daners2016a} that these
conditions cannot be dropped in \cite[Theorems~4.4 and~5.2]{Daners2016a}
without one of the implications in those theorems failing.

\paragraph*{Aim of this note} The paper is devoted to an in-depth study
of the conditions~\eqref{eq:domination-condition}
and~\eqref{eq:smoothing-condition}. While, on spaces of continuous
functions over a compact space, both conditions are always fulfilled, we
will show in Section~\ref{section:domination-smoothing-and-compactness}
that both conditions are rather strong on other function spaces such as
the $L^p$-spaces. When $p \in [1,\infty)$, we see in
Corollary~\ref{cor:smoothing-condition-for-semigroups} that
condition~\eqref{eq:smoothing-condition} forces the semigroup
$(e^{tA})_{t \ge 0}$ to be eventually compact.

In Section~\ref{section:existence-of-positive-eigenvectors} we present a
short intermezzo on the existence of positive eigenvectors complementing
earlier results in \cite[Theorem~7.7.(i)]{Daners2016}. In
Sections~\ref{section:pf-theorem-for-resolvents}
and~\ref{section:pf-theorem-for-semigroups} we show that some of the
implications in the characterisation results in \cite[Theorems~4.4
and~5.2]{Daners2016a} remain true without the
conditions~\eqref{eq:domination-condition}
and~\eqref{eq:smoothing-condition}.

\paragraph*{Eventual positivity: terminology}
Several notions of eventual positivity were discussed in
\cite{Daners2016} and \cite{Daners2016a}, some of which we recall for
the convenience of the reader. For a concise formulation we introduce
some notation. Let $E$ be a real or complex Banach lattice. 
As usual we call $f \in E$ \emph{positive} if $f \ge 0$, and
we write $f>0$ if $f\geq 0$ but $f\neq 0$. If $u,f\in E_+$, then we write $f
\gg_u 0$ if there exists $\varepsilon > 0$ such that $f \ge\varepsilon
u$; in this case we call $f$ \emph{strongly positive with respect to
  $u$}. By $\calL(E)$ we denote the space of bounded linear operators on
$E$. An operator $T \in \calL(E)$ is called \emph{positive}, which we
denote by $T \ge 0$, if $TE_+ \subseteq E_+$. We call $T$ \emph{strongly
  positive} with respect to a vector $u \in E_+$ if $Tf \gg_u 0$ for
every $0 < f \in E_+$.

Now, let $E$ be a complex Banach lattice with real part $E_\bbR$ and let
$A \colon D(A) \to E$ be a linear operator. The operator $A$ is called 
\emph{real} if $D(A) = E_\bbR \cap D(A) + i E_\bbR \cap D(A)$ and if $A$ maps
$E_\bbR\cap D(A)$ to $E_\bbR$. 
The first notion of eventual positivity which we recall relates to the resolvent 
of $A$. We recall that
the resolvent $\lambda\mapsto\Res(\lambda,A) := (\lambda I- A)^{-1} \in 
\calL(E)$ is an analytic map on the \emph{resolvent set} $\resSet(A)$. 
We denote the spectrum of $A$ by $\spec(A)$.

\begin{definition*}
  Let $A\colon E \supseteq D(A) \to E$ be a linear operator on a complex
  Banach lattice $E$ and let $u \in E$ be a quasi-interior point of
  $E_+$. Let $\lambda_0 \in \bbR \cap \spec(A)$ be an isolated spectral
  value of $A$.
  \begin{enumerate}[(a)]
  \item The resolvent $\Res(\phdot,A)$ is called \emph{individually
      eventually strongly positive with respect to $u$ at $\lambda_0$}
    if, for every $0 < f \in E$, there exists a $\lambda_1 > \lambda_0$
    with the following properties: $(\lambda_0,\lambda_1] \subseteq
    \resSet(A)$ and $\Res(\lambda,A)f \gg_u 0$ for all $\lambda \in
    (\lambda_0,\lambda_1]$.
  \item The resolvent $\Res(\phdot,A)$ is called \emph{individually
      eventually strongly negative with respect to $u$ at $\lambda_0$}
    if, for every $0 < f \in E$, there exists a $\lambda_1 < \lambda_0$
    with the following properties: $[\lambda_1,\lambda_0) \subseteq
    \resSet(A)$ and $-\Res(\lambda,A)f \gg_u 0$ for all $\lambda \in
    [\lambda_1,\lambda_0)$
  \end{enumerate}
\end{definition*}
We speak of \emph{individual} eventual positivity as $\lambda_1$ can
depend on $f$. One can, of course, also define \emph{uniform} eventual
positivity; see \cite[Definitions~4.1 and~4.2]{Daners2016a} for details.
Note that if $\Res(\cdot,A)$ is eventually positive or negative at some
$\lambda_0\in\spec(A)$, then $A$ is \emph{real}, that is, $A$ leaves the
real part $E_\bbR$ of $E$ invariant.

The above definitions make sense even if $\lambda_0$ is not necessarily
an isolated point of $\spec(A)$, see \cite[Definitions~4.1
and~4.2]{Daners2016a}, but the above definition is sufficient for our
purposes. In fact we will usually assume that $\lambda_0$ is a pole of
the resolvent $\Res(\phdot,A)$ as an analytic map on $\resSet(A)$. Such
a pole is always an eigenvalue of $A$ as seen in \cite[Theorem 2 in
Section VIII.8]{Yosida1995}, and the pole is of order one if and only if
the geometric and algebraic multplicities of $\lambda_0$ as an
eigenvalue of $A$ coincide.

We next deal with $C_0$-semigroup on $E$ generated by an operator $A$ and 
denoted by $(e^{tA})_{t \ge 0}$.
\begin{definition*}
  Let $(e^{tA})_{t \ge 0}$ be a $C_0$-semigroup on a complex Banach
  lattice $E$ and let $u \in E$ be a quasi-interior point of $E_+$. The
  semigroup $(e^{tA})_{t \ge 0}$ is called \emph{individually eventually
    strongly positive with respect to $u$} if, for every $0 < f \in E$,
  there exists a time $t_0 \ge 0$ such that $e^{tA}f \gg_u 0$ for all $t
  \ge t_0$.
\end{definition*}
We talk about \emph{uniform eventual positivity} if $t_0$ can be chosen
independently of $f\in E_+$, see \cite[Definition~5.1]{Daners2016a} for
details. It is not difficult to see that $A$ is a
real operator if and only if the operator $e^{tA}$ is real for every $t
\in [0,\infty)$.

To a great extent the long-term behaviour of the semigroup is determined by
properties relating to the \emph{spectral bound} $\spb(A) := \sup
\{\repart \lambda\colon \lambda \in \spec(A)\} \in [-\infty,\infty]$ of
$A$. If $\spb(A) \in (-\infty,\infty)$, then of particular importance
is the \emph{peripheral spectrum} of $A$ given by $\spec_{\per}(A) :=
\{\lambda \in \spec(A)\colon \repart \lambda = \spb(A)\}$ and the
existence of a \emph{dominant spectral value}, that is,
$\lambda_0\in\spec(A)$ such that $\spec_{\per}(A) = \{\lambda_0\}$.

In Section~\ref{section:existence-of-positive-eigenvectors} we will also
encounter a slightly weaker notion of eventual positivity; see
Corollaries~\ref{cor:existence-of-positive-eigenvectors-resolvent}
and~\ref{cor:existence-of-positive-eigenvectors-semigroup} and the
preceeding discussions.

We complete this section by clarifying some notation we will use
throughout.  The \emph{dual space} of a real or complex Banach lattice 
$E$ is denoted by $E'$; it is also
a Banach lattice and its positive cone $E'_+$ is
called the \emph{dual cone} of $E_+$. A vector $\varphi \in E'$ is
positive if and only if $\langle \varphi, f \rangle \ge 0$ for all $0
\le f\in E$. Since $E'$ is a Banach lattice, all the notation introduced
above implies to the elements of this space, too; in particular, we
write $\varphi > 0$ if a functional $\varphi \in E'$ fulfils $\varphi
\ge 0$ but $\varphi \not= 0$. We call the functional $\varphi \in E'$
\emph{strictly positive} if $\langle \varphi, f\rangle > 0$ for all $0 <
f \in E$. Note that every quasi-interior point of $E'_+$ is a strictly
positive functional, but the converse is not in genera true. If $A\colon
E\supseteq D(A) \to E$ is a densely defined linear operator, then its dual
operator is denoted by $A': E' \supseteq D(A') \to E'$.

\paragraph*{Acknowledgement}
Some results presented here appear in the PhD Thesis of the second 
author \cite{GlueckDISS}, in particular 
Corollary~\ref{cor:existence-of-positive-eigenvectors-semigroup},
Theorem~\ref{thm:perron-frobenius-for-resolvent} and a weaker version
of Theorem~\ref{thm:composition-of-smoothing-operators}. We would like
to thank James Kennedy for useful discussions on some of the material.

\section{Domination, smoothing and compactness}
\label{section:domination-smoothing-and-compactness}
In this section we show that, on certain types of Banach lattices, the
conditions~\eqref{eq:domination-condition}
and~\eqref{eq:smoothing-condition} have rather strong consequences. Let
$E$ be a complex Banach lattice, let $u \in E_+$.
The fact that the gauge norm on $E_u$ is stronger than the
induced norm from $E$ has severe consequences on every operator $T \in
\calL(E)$ which maps $E$ to $E_u$ as we shall see in the main theorems of
this section.

To state the theorems we need to recall that a complex Banach lattice $E$
is said to have \emph{order continuous norm} if its real part $E_\bbR$
has order continuous norm. We refer to
\cite[Definition~2.4.1]{Meyer-Nieberg1991} for a precise definition. We
recall that every $L^p$-space with $1 \le p < \infty$ has order
continuous norm, as has the space $c_0$ of all real- or complex-valued
sequences which converge to $0$ endowed with the supremum norm. The
space of continuous functions on a compact Hausdorff space $K$ has never
order continuous norm unless $K$ is finite. We start with a lemma.
\begin{lemma}
  \label{lem:compactness-criterion}
  Let $E$ be a real or complex Banach lattice with order continuous norm
  and let $u \in E_+$.
  \begin{enumerate}[\upshape (i)]
  \item If $T\in\calL(E,E_u)$, then $T\in\calL(E)$ is weakly compact.
  \item If $T\in\calL(E,E_u)$ is weakly compact, then $T\in\calL(E)$ is
    compact.
  \end{enumerate}
\end{lemma}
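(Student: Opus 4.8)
The plan is to view any $T\in\calL(E,E_u)$ as an operator on $E$ by composing with the canonical inclusion $j\colon E_u\to E$; since the gauge norm dominates the norm induced from $E$, this inclusion is bounded, and the operator ``$T\in\calL(E)$'' in the statement is precisely $j\circ T$ with $T$ regarded as a map into $E_u$. Because the weakly compact operators form an operator ideal, both assertions reduce to two classical properties of the inclusion $j$ itself: weak compactness for~(i), and complete continuity for~(ii). I would therefore isolate these two properties of $j$ first and then compose.

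For~(i), I first show that $j$ is weakly compact. The closed unit ball of $E_u$ is $B_{E_u}=\{f\in E\colon|f|\le u\}$; in the real case this is exactly the order interval $[-u,u]$, and in the complex case it is contained in $[-u,u]+i[-u,u]$ because $|\repart f|,|\impart f|\le|f|\le u$. By the standard characterisation of order continuity of the norm, every order interval in $E$ is relatively weakly compact (see \cite[Theorem~2.4.2]{Meyer-Nieberg1991}), so $j(B_{E_u})$ is relatively weakly compact in $E$, i.e.\ $j$ is weakly compact. Since $T\colon E\to E_u$ is bounded, $T(B_E)$ lies in a multiple of $B_{E_u}$, and hence $jT(B_E)$ is relatively weakly compact; this proves~(i).

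For~(ii), I use that $(E_u,\|\cdot\|_u)$ is isometrically Banach lattice isomorphic to a space $\Cont(K)$ of continuous functions on a compact Hausdorff space $K$, as recalled in the introduction (the corollary to \cite[Proposition~II.7.2]{Schaefer1974} together with Kakutani's theorem \cite[Theorem~2.1.3]{Meyer-Nieberg1991}). Such a space has the Dunford--Pettis property, and a weakly compact operator defined on a space with the Dunford--Pettis property is completely continuous, i.e.\ it maps relatively weakly compact sets to relatively norm-compact sets. Applying this to the weakly compact inclusion $j$ from part~(i) shows that $j$ is completely continuous. Now if $T\colon E\to E_u$ is weakly compact, then $T(B_E)$ is relatively weakly compact in $E_u$, and therefore $j(T(B_E))$ is relatively norm-compact in $E$; hence $jT\in\calL(E)$ is compact.

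The main obstacle is structural rather than computational: the whole argument hinges on recognising the two standard facts that drive it, namely that order continuity of the norm forces order intervals, and in particular $B_{E_u}$, to be (relatively) weakly compact, and that the $\Cont(K)$-representation of $E_u$ supplies the Dunford--Pettis property which upgrades weak compactness to norm compactness. The only points requiring genuine care are the passage from the real to the complex case in the order-interval description of $B_{E_u}$, and the bookkeeping that the operator $T\in\calL(E)$ really is $j$ composed with $T\colon E\to E_u$, so that the ideal property and the complete-continuity argument apply verbatim.
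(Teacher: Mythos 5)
Your proof is correct and takes essentially the same approach as the paper: both factor the operator as $j\circ T$ with $j\colon E_u\to E$ the canonical inclusion, deduce weak compactness of $j$ from order continuity of the norm (weak compactness of order intervals), and use the Dunford--Pettis property of the AM-space $E_u$ to upgrade weak compactness to compactness in part~(ii). The only cosmetic differences are that the paper reduces to real scalars at the outset and runs part~(ii) on sequences via the Eberlein--\v{S}mulian theorem, whereas you treat the complex case directly and invoke the equivalent set-based formulation of complete continuity.
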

\begin{proof}
  It suffices to prove the lemma in case that the scalar field is real.
  Since $E$ has order continuous norm every order interval in $E$ is
  weakly compact; see \cite[Theorem~2.4.2]{Meyer-Nieberg1991}. By
  definition of the gauge norm \eqref{eq:gauge-norm} every bounded set
  in $E_u$ is contained in an order interval in $E$. Hence the natural
  injection $j\colon E_u\rightarrow E$ given by $j(x)=x$ is weakly
  compact. If we are precise, then $T\in\calL(E)$ is the composition
  $j\circ T$.
  
  (i) As $T\colon E\to E_u$ is bounded and $j\colon E_u\to E$ is
  weakly compact we conclude that $T\colon E\to E$ is weakly compact.

  (ii) Because $E_u$ is a Dunford-Pettis space and $j\in\calL(E_u,E)$ is 
  weakly compact, $j$ is a Dunford--Pettis operator, that is,
  $x_n\rightharpoonup 0$ weakly in $E_u$ implies that $x_n\to 0$ in $E$;
  see Definition~3.7.6, Proposition~3.7.9 and Proposition~1.2.13 in 
  \cite{Meyer-Nieberg1991}.
  Let now $(x_n)$ be a bounded sequence in $E$. Then by the weak
  compactness of $T$ and the Eberlein--\v{S}mulian theorem
  \cite[Theorem~V.6.1]{dunford1958}, we can find a subsequence
  $(x_{n_k})$ such that $Tx_{n_k}\rightharpoonup y$ weakly in $E_u$ for
  some $y\in E_u$. Using that $j\in\calL(E_u,E)$ is a Dunford--Pettis
  operator we conclude that $(j\circ T)x_{n_k}\to Ty$ in $E$. This
  proves that $T\colon E\to E$ is a compact operator.
\end{proof}
\begin{theorem}
  \label{thm:composition-of-smoothing-operators}
  Let $E$ be a real or complex Banach lattice with order continuous norm
  and let $u \in E_+$. If $T_k\in\calL(E)$ and $T_k E \subseteq E_u$ for
  $k \in \{1,2\}$, then $T_2T_1\in\calL(E)$ is compact.
\end{theorem}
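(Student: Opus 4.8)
The plan is to reduce the statement to the two parts of Lemma~\ref{lem:compactness-criterion} by regarding $T_1$ and $T_2$ not merely as operators on $E$ but as operators into the smaller space $(E_u,\|\cdot\|_u)$. First I would check that each $T_k$ actually belongs to $\calL(E,E_u)$, not just to $\calL(E)$: a priori the hypotheses only give that $T_k$ is bounded on $E$ and that $T_k E\subseteq E_u$, which says nothing about boundedness with respect to the (stronger) gauge norm. To upgrade the set inclusion to boundedness I would invoke the closed graph theorem. Since $(E_u,\|\cdot\|_u)$ is a Banach space and the gauge norm dominates the norm induced by $E$, any sequence $x_n\to x$ in $E$ with $T_k x_n\to y$ in $E_u$ satisfies $T_k x_n\to y$ in $E$ as well, while continuity of $T_k$ on $E$ forces $T_k x_n\to T_k x$; hence $y=T_k x$ and the graph of $T_k\colon E\to E_u$ is closed. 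Thus $T_k\in\calL(E,E_u)$ for $k\in\{1,2\}$.

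With this in hand, Lemma~\ref{lem:compactness-criterion}(i) applied to $T_1$ shows that $T_1\in\calL(E)$ is weakly compact. I would then view the composition $T_2T_1$ as an operator from $E$ into $E_u$; it is bounded because $T_1\in\calL(E)$ and $T_2\in\calL(E,E_u)$. The key claim is that $T_2T_1\colon E\to E_u$ is weakly compact. To see this, take a bounded sequence $(x_n)$ in $E$; weak compactness of $T_1$ together with the Eberlein--\v{S}mulian theorem yields a subsequence with $T_1 x_{n_k}\rightharpoonup w$ weakly in $E$ for some $w\in E$. Since every bounded operator between Banach spaces is continuous for the respective weak topologies, $T_2\in\calL(E,E_u)$ sends this into $T_2T_1 x_{n_k}\rightharpoonup T_2 w$ weakly in $E_u$. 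Hence bounded sets in $E$ are mapped by $T_2T_1$ into relatively weakly compact subsets of $E_u$, that is, $T_2T_1\in\calL(E,E_u)$ is weakly compact.

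Finally, Lemma~\ref{lem:compactness-criterion}(ii) applied to the weakly compact operator $T_2T_1\in\calL(E,E_u)$ gives that $T_2T_1\in\calL(E)$ is compact, which is the assertion. The one step that requires genuine care is the closed graph argument: everything afterwards is a formal chaining of the two lemma parts with the weak--weak continuity of a bounded operator, but the lemma hypotheses are phrased for operators in $\calL(E,E_u)$, whereas the theorem only assumes $T_k\in\calL(E)$ with $T_kE\subseteq E_u$. The automatic boundedness into the gauge-norm space is exactly the point where one must use completeness of $E_u$.
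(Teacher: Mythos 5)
Your proof is correct and follows essentially the same route as the paper's: the closed graph theorem (using completeness of $(E_u,\|\cdot\|_u)$ and the fact that the gauge norm dominates the $E$-norm) to get $T_k \in \calL(E,E_u)$, then Lemma~\ref{lem:compactness-criterion}(i) for $T_1$, weak-to-weak continuity of $T_2$ to make $T_2T_1\colon E \to E_u$ weakly compact, and finally Lemma~\ref{lem:compactness-criterion}(ii). The only cosmetic difference is that you spell out the weak compactness of the composition via Eberlein--\v{S}mulian sequences, where the paper states it directly from weak continuity.
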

\begin{proof}
  First note that due to the closed graph theorem, $T_k\in\calL(E,E_u)$
  for $k=1,2$, where $E_u$ is as usual endowed with the gauge norm. By
  Lemma~\ref{lem:compactness-criterion}(i) $T_1\in\calL(E)$ is weakly
  compact. As $T_2\colon E\to E_u$ is continuous it is also weakly
  continuous and hence the composition $T_2T_1\colon E\to E_u$ is weakly
  compact. Now Lemma~\ref{lem:compactness-criterion}(ii) implies that
  $T_2T_1\in\calL(E)$ is compact.
\end{proof}
As a special case we can consider one operator $T_1=T_2=T$. If we assume 
that $E$ is reflexive, then we obtain an even stronger result.
Examples for reflexive Banach lattices are the $L^p$-spaces with
$1<p<\infty$ on an arbitrary measure space.
\begin{theorem}
  \label{thm:smoothing-operators}
  Let $E$ be a real or complex Banach lattice and let $u \in
  E_+$. Suppose that $T\in\calL(E)$ and that $TE \subseteq E_u$. Then
  the following assertions are true.
  \begin{enumerate}[\upshape (i)]
  \item If $E$ has order continuous norm, then $T^2\in\calL(E)$ is compact.
  \item If $E$ is reflexive, then $T\in\calL(E)$ is compact.
  \end{enumerate}
\end{theorem}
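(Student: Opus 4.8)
The plan is to handle the two parts separately: part~(i) falls out of the preceding theorem, while part~(ii) needs the extra leverage supplied by reflexivity.

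For part~(i) I would simply specialise Theorem~\ref{thm:composition-of-smoothing-operators} to the diagonal case $T_1 = T_2 = T$. Since $E$ has order continuous norm, $T \in \calL(E)$, and $TE \subseteq E_u$, that result applied with both operators equal to $T$ shows immediately that $T^2 \in \calL(E)$ is compact; nothing further is required.

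For part~(ii) I would argue directly, recycling the machinery of Lemma~\ref{lem:compactness-criterion} rather than going through $T^2$ (note that compactness of $T^2$ together with weak compactness of $T$ would not, by itself, yield compactness of $T$). First, the closed graph theorem gives $T \in \calL(E,E_u)$, because $TE \subseteq E_u$ and the gauge norm dominates the norm induced from $E$. Since every reflexive Banach lattice has order continuous norm, the hypotheses of Lemma~\ref{lem:compactness-criterion} are met, and in particular the natural injection $j \colon E_u \to E$ is a Dunford--Pettis operator, exactly as shown in the proof of that lemma (using that $E_u$ is a $C(K)$-space, hence has the Dunford--Pettis property, and that $j$ is weakly compact).

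The concluding step then uses reflexivity to extract weakly convergent subsequences. Given a bounded sequence $(x_n)$ in $E$, reflexivity and the Eberlein--\v{S}mulian theorem produce a subsequence with $x_{n_k} \rightharpoonup x$ weakly in $E$. As $T \colon E \to E_u$ is bounded, it is weak-to-weak continuous, so $T x_{n_k} \rightharpoonup Tx$ weakly in $E_u$; feeding the null sequence $T x_{n_k} - Tx$ into the Dunford--Pettis property of $j$ yields $T x_{n_k} \to Tx$ in the norm of $E$. Hence $T$ sends bounded sequences to sequences admitting norm-convergent subsequences, so $T$ is compact. The only point demanding care, and the main (if modest) obstacle, is ensuring that the Dunford--Pettis conclusion of Lemma~\ref{lem:compactness-criterion} is genuinely available in the reflexive setting, which rests on the standard fact that reflexive Banach lattices have order continuous norm.
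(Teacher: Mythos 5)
Your proposal is correct and takes essentially the same route as the paper: part (i) is the identical specialisation of Theorem~\ref{thm:composition-of-smoothing-operators}, and part (ii) rests on the same ingredients the paper uses, namely the closed graph theorem, the fact that reflexive Banach lattices have order continuous norm, and the Dunford--Pettis machinery behind Lemma~\ref{lem:compactness-criterion}(ii). The only cosmetic difference is that the paper first shows $T \in \calL(E,E_u)$ is weakly compact (via Banach--Alaoglu and weak-to-weak continuity) and then cites Lemma~\ref{lem:compactness-criterion}(ii) as a black box, whereas you inline that lemma's proof by extracting the weakly convergent subsequence in $E$ before applying $T$.
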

\begin{proof}
  (i) This is an obvious consequence of
  Theorem~\ref{thm:composition-of-smoothing-operators} taking
  $T_1=T_2=T$.

  (ii) First note that due to the closed graph theorem,
  $T\in\calL(E,E_u)$.  If $E$ is reflexive, then by the Banach-Alaoglu
  theorem every bounded set in $E$ is contained in a weakly compact
  set. As $T\in\calL(E,E_u)$ is continuous and thus weakly 
  continuous it follows that
  $T\in\calL(E,E_u)$ is weakly compact. Since it follows from
  \cite[Theorem~2.4.2(v)]{Meyer-Nieberg1991} that every reflexive Banach
  lattice has order continuous norm, we can now apply
  Lemma~\ref{lem:compactness-criterion}(ii) which shows that
  $T\in\calL(E)$ is compact.
\end{proof}
In \cite[Theorems~4.4 and~5.2]{Daners2016a} it was always assumed that
certain spectral values of $A$ be poles of the resolvent. In the
corollaries below we will show that the above results imply that such
assumptions are automatically satisfied if $E$ has order continuous norm
and if one of the conditions~\eqref{eq:domination-condition}
or~\eqref{eq:smoothing-condition} is fulfilled.  It is worthwhile
pointing out the the assumption of the first corollary is a bit more
general than condition~\eqref{eq:domination-condition}.
\begin{corollary}
  \label{cor:domination-condition-for-domain}
  Let $E$ be a complex Banach lattice, $u \in E_+$ and let $A\colon E
  \supseteq D(A) \to E$ be a linear operator with non-empty resolvent
  set. Suppose that $D(A^n) \subseteq E_u$ for some $n \in \bbN$. Then
  the following assertions are true.
  \begin{enumerate}[\upshape (i)]
  \item If $E$ has order continuous norm, then $\Res(\lambda,A)^{2n}$ is
    compact for every $\lambda \in \resSet(A)$.
  \item If $E$ is reflexive, then $\Res(\lambda,A)^{n}$ is compact for
    every $\lambda \in \resSet(A)$.
  \end{enumerate}
  In either case, all spectral values of $A$ are poles of the resolvent
  $\Res(\phdot,A)$ and have finite algebraic multiplicity.
\end{corollary}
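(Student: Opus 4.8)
The plan is to reduce both compactness assertions to a single application of Theorem~\ref{thm:smoothing-operators} and then to deduce the spectral statement by a spectral-mapping argument relating $A$ to its resolvent. First I would observe that the hypothesis $D(A^n) \subseteq E_u$ already yields compactness of a suitable power of each resolvent operator. Fix $\lambda \in \resSet(A)$ and write $R := \Res(\lambda, A)$. From the identity $AR = \lambda R - \id$ on $E$ one checks by induction that $R$ maps $D(A^{k})$ into $D(A^{k+1})$ for every $k \ge 0$; hence $R^n E \subseteq D(A^n) \subseteq E_u$. Setting $T := R^n \in \calL(E)$ we thus have $TE \subseteq E_u$, so Theorem~\ref{thm:smoothing-operators} applies: part~(i) gives that $T^2 = \Res(\lambda,A)^{2n}$ is compact whenever $E$ has order continuous norm, and part~(ii) gives that $T = \Res(\lambda,A)^{n}$ is compact whenever $E$ is reflexive. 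As the argument is valid for arbitrary $\lambda \in \resSet(A)$, this proves~(i) and~(ii).

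For the final assertion, fix some $\lambda \in \resSet(A)$ and let $m \in \{n, 2n\}$ be an exponent for which $R^m$ is compact, where $R = \Res(\lambda,A)$. The key point is that $R$ is now power-compact, hence a Riesz operator: every nonzero $\mu \in \spec(R)$ should be an isolated eigenvalue of finite algebraic multiplicity and a pole of $\Res(\phdot, R)$. I would deduce this from the spectral mapping theorem applied to $z \mapsto z^m$. The value $\mu^m$ is a nonzero, hence isolated, eigenvalue of the compact operator $R^m$ with finite-dimensional spectral subspace, and the Riesz projection of $R^m$ at $\mu^m$ decomposes via the holomorphic functional calculus as the finite sum of the Riesz projections of $R$ at the (at most $m$) points of $\spec(R)$ whose $m$-th power equals $\mu^m$. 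Consequently the Riesz projection of $R$ at $\mu$ has finite rank, which forces $\mu$ to be isolated in $\spec(R)$, of finite algebraic multiplicity, and a pole.

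It then remains to transport this information from $R$ to $A$ along the standard homeomorphism $\nu \mapsto (\lambda - \nu)^{-1}$, which maps $\spec(A)$ bijectively onto $\spec(R) \setminus \{0\}$ and under which the Riesz projection of $A$ at $\nu$ coincides with that of $R$ at $(\lambda - \nu)^{-1}$. Since the latter has finite rank and isolates the corresponding spectral point, the same holds for $A$; and because an isolated spectral value with finite-dimensional spectral subspace is automatically a pole of the resolvent, every $\nu \in \spec(A)$ is a pole of $\Res(\phdot, A)$ of finite algebraic multiplicity. The main obstacle I anticipate lies exactly in this transfer: one cannot simply conclude that $R$ is compact, since $R^m$ compact does not imply $R$ compact (a nonzero square-zero operator is a counterexample), so the argument must proceed through the spectral mapping theorem and the coincidence of Riesz projections rather than through a direct compactness statement for $R$ itself.
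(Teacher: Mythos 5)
Your proposal is correct, and for the two compactness assertions it coincides with the paper's proof: fix $\lambda \in \resSet(A)$, note that $\Res(\lambda,A)^n E \subseteq D(A^n) \subseteq E_u$ (the paper simply records this as the equality $\Res(\lambda,A)^n E = D(A^n)$, so your induction via $AR = \lambda R - \id$ is a harmless but unnecessary detour), and apply Theorem~\ref{thm:smoothing-operators} to $T = \Res(\lambda,A)^n$. The difference lies in the final spectral assertion: the paper dispatches it by citing \cite[Theorem~5.8-F]{Taylor1958}, which states exactly that a closed operator whose resolvent has some compact power has only poles of finite algebraic multiplicity in its spectrum, whereas you reprove this classical fact by hand, via Riesz theory for the compact operator $R^m$, the polynomial spectral mapping theorem, the decomposition of the Riesz projection of $R^m$ at $\mu^m$ into the finitely many Riesz projections of $R$ at the $m$-th roots of $\mu^m$ lying in $\spec(R)$, and the standard correspondence $\nu \mapsto (\lambda-\nu)^{-1}$ between $\spec(A)$ and $\spec(R)\setminus\{0\}$ together with the coincidence of the associated Riesz projections. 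This substitute argument is sound; what it buys is self-containedness at the cost of length, and it correctly identifies the one trap (one cannot conclude that $R$ itself is compact). One presentational slip deserves mention: you write that finite rank of the Riesz projection of $R$ at $\mu$ ``forces $\mu$ to be isolated'', but that projection is only defined once $\mu$ is known to be isolated. The isolation must come first, and it does follow from your own spectral-mapping step: if $U$ is an open set with $U \cap \spec(R^m) = \{\mu^m\}$, then $\{z \in \bbC : z^m \in U\} \cap \spec(R)$ is precisely the finite set of $m$-th roots of $\mu^m$ in $\spec(R)$, so each of these points, in particular $\mu$, is isolated in $\spec(R)$; only then do the finite-rank, finite-multiplicity and pole statements follow in the order you give them. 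With that reordering, your proof is complete.
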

\begin{proof}
  Let $\lambda \in \resSet(A)$. Then $\Res(\lambda,A)^nE = D(A^n)
  \subseteq E_u$. Now Theorem~\ref{thm:smoothing-operators}(i) and (ii)
  yield (i) and (ii) respectively. In either case
  \cite[Theorem~5.8-F]{Taylor1958} implies that all spectral values of
  $A$ are poles of $\Res(\phdot,A)$ and have finite algebraic
  multiplicity.
\end{proof}
Corollary~\ref{cor:domination-condition-for-domain} is useful to prove
that an operator in a concrete application has an eventually positive
resolvent. This can often be done by using
\cite[Theorem~4.4]{Daners2016a}. As it turns out, if $E$ has order
continuous norm and the domination
condition~\eqref{eq:domination-condition} is fulfilled, then, as a 
consequence of Corollary~\ref{cor:domination-condition-for-domain}, some of the
spectral theoretic assumptions in \cite[Theorem~4.4]{Daners2016a} are
automatically satisfied.

\begin{corollary}
  \label{cor:smoothing-condition-for-semigroups}
  Let $E$ be a complex Banach lattice with order continuous norm, $u \in
  E_+$ and let $(e^{tA})_{t \ge 0}$ be a $C_0$-semigroup on $E$. Suppose
  that $e^{t_0A}E \subseteq E_u$ for some $t_0 \ge 0$.

  Then the semigroup $(e^{tA})_{t \ge 0}$ is eventually compact. In
  particular, all spectral values of $A$ are poles of the resolvent
  $\Res(\phdot,A)$ and have finite algebraic multiplicity. Moreover, the
  peripheral spectrum of $A$ is finite.
\end{corollary}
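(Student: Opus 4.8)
The plan is to deduce everything from a single structural fact, namely that the semigroup is eventually compact, and then to feed this into the classical spectral theory of such semigroups. First I would establish eventual compactness. Since $E$ has order continuous norm and $e^{t_0A}E\subseteq E_u$, I would apply Theorem~\ref{thm:composition-of-smoothing-operators} with $T_1=T_2=e^{t_0A}$ to conclude that $e^{2t_0A}=(e^{t_0A})^2$ is compact. For every $t\ge 2t_0$ one then writes $e^{tA}=e^{(t-2t_0)A}\,e^{2t_0A}$, which is the product of a bounded operator with a compact one and hence compact; thus $(e^{tA})_{t\ge 0}$ is eventually compact. No separate treatment of the degenerate case $t_0=0$ is needed: there $e^{t_0A}E=E\subseteq E_u$ forces $E=E_u$, so the same theorem makes the identity compact and $E$ finite dimensional, and the displayed factorisation still applies with $2t_0=0$.

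Next I would read off the spectral conclusions. An eventually compact $C_0$-semigroup is eventually norm continuous, and for such semigroups two standard facts are available: the spectral mapping theorem $\spec(e^{tA})\setminus\{0\}=e^{t\spec(A)}$ for all $t\ge 0$, and the boundedness of $\spec(A)$ in every right half-plane $\{\repart\lambda\ge c\}$. On the other hand, Riesz--Schauder theory applied to the compact operator $e^{2t_0A}$ shows that its nonzero spectrum consists of isolated eigenvalues of finite algebraic multiplicity, accumulating at most at $0$ and carrying finite dimensional Riesz projections. Combining these, the spectral mapping theorem sends each $\lambda\in\spec(A)$ to $e^{2t_0\lambda}\in\spec(e^{2t_0A})\setminus\{0\}$, so $\spec(A)$ is discrete; together with boundedness in right half-planes this makes $\{\lambda\in\spec(A)\colon\repart\lambda\ge c\}$ finite for every $c\in\bbR$. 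Identifying, for an isolated $\lambda_0\in\spec(A)$, the Riesz projection of $A$ at $\lambda_0$ with the finite rank Riesz projection of $e^{2t_0A}$ at $e^{2t_0\lambda_0}$ then shows that $\lambda_0$ is a pole of $\Res(\phdot,A)$ of finite algebraic multiplicity. Finally the peripheral spectrum is contained in $\{\lambda\in\spec(A)\colon\repart\lambda\ge\spb(A)\}$ (and is empty when $\spec(A)=\emptyset$), hence finite.

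The step I expect to be the main obstacle is precisely this transfer of spectral data from the compact operator $e^{2t_0A}$ back to its generator. Because $\lambda\mapsto e^{2t_0\lambda}$ is $\tfrac{\pi i}{t_0}\bbZ$-periodic, and thus very far from injective, the spectral mapping theorem alone would permit a single eigenvalue of $e^{2t_0A}$ to have infinitely many preimages in $\spec(A)$ strung out along a vertical line; ruling this out is exactly what the boundedness of $\spec(A)$ in right half-planes accomplishes, and that boundedness is a genuine consequence of eventual norm continuity (ultimately of the decay of the resolvent along vertical lines) rather than of compactness of a single operator. I would therefore take care to invoke these two properties in the sharp form valid for eventually norm continuous and eventually compact semigroups, as developed in the standard monograph literature, e.g.\ that of Engel and Nagel; granting them, the identification of Riesz projections and hence of pole orders and algebraic multiplicities is routine.
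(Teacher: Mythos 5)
Your proof is correct and follows essentially the same route as the paper: compactness of $e^{2t_0A}$ via Theorem~\ref{thm:composition-of-smoothing-operators} (equivalently Theorem~\ref{thm:smoothing-operators}), then eventual compactness by factoring $e^{tA}=e^{(t-2t_0)A}e^{2t_0A}$, then the classical spectral theory of eventually compact semigroups. The only difference is that where the paper simply cites \cite[Corollary~V.3.2 and Theorem~II.4.18]{Engel2000}, you reconstruct the proofs of those results (spectral mapping theorem, Riesz--Schauder theory for $e^{2t_0A}$, boundedness of $\spec(A)$ in right half-planes, and the identification of Riesz projections), and your reconstruction, including the treatment of the periodicity of $\lambda\mapsto e^{2t_0\lambda}$, is sound.
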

\begin{proof}
  The semigroup is eventually compact since
  Theorem~\ref{thm:smoothing-operators} implies that the operator
  $e^{2t_0A}$ is compact. Hence, according to
  \cite[Corollary~V.3.2]{Engel2000}, all spectral values of $A$ are
  poles of $\Res(\phdot,A)$ and have finite algebraic multiplicity.
  It now follows from \cite[Theorem~II.4.18]{Engel2000} that the peripheral
  spectrum of $A$ is finite.
\end{proof}

As similar comment as given after
Corollary~\ref{cor:domination-condition-for-domain} also applies here.
In order to show that a given semigroup is eventually positive, one
can combine results from \cite[Section~5]{Daners2016a} with
Corollary~\ref{cor:smoothing-condition-for-semigroups}.

\section{The existence of positive eigenvectors}
\label{section:existence-of-positive-eigenvectors}
This section is devoted to a Kre\u{\i}n--Rutman type theorem about the
existence of positive eigenvectors. For eventually positive semigroup, a
related result was given in \cite[Theorem~7.7(i)]{Daners2016}. Similar
results for eventually and asymptotically positive operators can be
found in \cite[Section~6]{GlueckEPO}. The latter results also contain 
existence results about positive eigenvectors of the dual
operator. The following theorem and its corollaries are in the spirit of
this latter result. The proof of
Theorem~\ref{thm:existence-of-positive-eigenvectors} is inspired by the
proofs of \cite[Theorem~7.7(i)]{Daners2016} and
\cite[Theorem~6.1]{GlueckEPO}.
\begin{theorem}
  \label{thm:existence-of-positive-eigenvectors}
  Let $A\colon E \supseteq D(A) \to E$ be a linear operator on a complex
  Banach lattice $E$ and let $\lambda_0 \in \spec(A) \cap \bbR$ be a
  pole of the resolvent $\Res(\phdot,A)$. Suppose that we have, for
  every $f \in E_+$,
  \begin{equation}
    \label{eq:asymp-pos-resolvent}
    (\lambda - \lambda_0)\dist(\Res(\lambda,A)f,E_+) \to 0
  \end{equation}
  as $\lambda \downarrow \lambda_0$. Then the following assertions hold:
  \begin{enumerate}[\upshape (i)]
  \item The number $\lambda_0$ is an eigenvalue of $A$ and the
    corresponding eigenspace $\ker(\lambda_0 I- A)$ contains a positive,
    non-zero vector.
  \item If $A$ is densely defined, then $\lambda_0$ is an eigenvalue of
    the dual operator $A'$ and the corresponding eigenspace 
    $\ker(\lambda_0 I-A')$ contains a positive, non-zero vector.
  \end{enumerate}
\end{theorem}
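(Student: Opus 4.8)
The plan is to read off the eigenvector directly from the principal part of the Laurent expansion of $\Res(\phdot,A)$ at $\lambda_0$. Write $m \ge 1$ for the order of the pole, let $P$ be the associated spectral projection (the residue at $\lambda_0$), and set $B := (A-\lambda_0 I)^{m-1}P$. This is the leading Laurent coefficient, it is nonzero precisely because the pole has order $m$, and one has the norm convergence $(\lambda-\lambda_0)^m\,\Res(\lambda,A) \to B$ as $\lambda \downarrow \lambda_0$. I would begin by recording the two standard structural facts about poles that the argument rests on: $\im P = \ker\bigl((\lambda_0 I-A)^m\bigr) \subseteq D(A)$, and $(A-\lambda_0 I)B = (A-\lambda_0 I)^m P = 0$, so that $\im B \subseteq \ker(\lambda_0 I - A)$.

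The core step is to show that $B$ is a positive operator. Fix $f \in E_+$. Since $E_+$ is a cone, $\dist(\phdot,E_+)$ is positively homogeneous, so for $\lambda > \lambda_0$
\[
  \dist\bigl((\lambda-\lambda_0)^m\Res(\lambda,A)f,\,E_+\bigr)
  = (\lambda-\lambda_0)^{m-1}\,(\lambda-\lambda_0)\dist\bigl(\Res(\lambda,A)f,\,E_+\bigr).
\]
The second factor tends to $0$ by \eqref{eq:asymp-pos-resolvent}, while the first factor stays bounded (it equals $1$ if $m=1$ and tends to $0$ if $m \ge 2$), so the left-hand side tends to $0$. Since $(\lambda-\lambda_0)^m\Res(\lambda,A)f \to Bf$ and $g \mapsto \dist(g,E_+)$ is continuous, this forces $\dist(Bf,E_+)=0$, and as $E_+$ is closed we get $Bf \in E_+$. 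Hence $B \ge 0$.

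For (i) it remains to produce a nonzero vector in the range of $B$. As $B \neq 0$ and every element of $E$ is a complex-linear combination of (at most four) positive vectors, there is $g \in E_+$ with $Bg \neq 0$; positivity of $B$ then gives $h := Bg > 0$, and the structural facts above yield $h \in \ker(\lambda_0 I - A)$. For (ii) I would run the identical argument on the dual side. Since $A$ is densely defined with nonempty resolvent set it is closed, so $\resSet(A)=\resSet(A')$ and $\Res(\lambda,A')=\Res(\lambda,A)'$; because taking adjoints is isometric on $\calL(E)$, the point $\lambda_0$ is a pole of $\Res(\phdot,A')$ of the same order $m$ with leading coefficient $B'$. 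The adjoint $B'$ is positive and nonzero, and the structural identity applied to $A'$ gives $(A'-\lambda_0 I)B'=0$; choosing $\psi \in E'_+$ with $B'\psi \neq 0$ then yields the positive eigenfunctional $\varphi := B'\psi \in \ker(\lambda_0 I - A')$.

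The positivity extraction is essentially forced once \eqref{eq:asymp-pos-resolvent} is in hand, so the hard part will be the spectral bookkeeping: justifying that $B=(A-\lambda_0 I)^{m-1}P$ is the leading Laurent coefficient with $(A-\lambda_0 I)B=0$ and $\im P \subseteq D(A)$, and then transferring this pole structure verbatim to $A'$. All of this is standard Riesz--Dunford theory, and I would cite it rather than reprove it.
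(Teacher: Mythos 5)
Your proposal is correct and takes essentially the same route as the paper's own proof: your $B=(A-\lambda_0 I)^{m-1}P$ is precisely the leading Laurent coefficient $Q_{-m}$ in the expansion \eqref{eq:laurent-expansion} that the paper works with, the positivity of this operator is deduced from \eqref{eq:asymp-pos-resolvent} by the same homogeneity-and-closedness argument (which you spell out in more detail than the paper does), and part (ii) is obtained by passing to adjoints in the Laurent expansion exactly as in the paper. There is no gap to report.
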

We note in passing that in \cite{Daners2016a} the condition
\eqref{eq:asymp-pos-resolvent} is referred to as $\Res(\phdot,A)$ being
\emph{individually asymptotically positive at $\lambda_0$} if
$\lambda_0$ is a first-order pole of $\Res(\phdot,A)$.
\begin{proof}[Proof of Theorem~\ref{thm:existence-of-positive-eigenvectors}]
  (i) Let $m \in \bbN$, $m\geq 1$ denote the order of $\lambda_0$ as a pole
  of $\Res(\phdot,A)$ and let
  \begin{equation}
    \label{eq:laurent-expansion}
    \Res(\lambda,A) = \sum_{k=-m}^\infty (\lambda- \lambda_0)^k Q_k
  \end{equation}
  be the Laurent series expansion of $\Res(\phdot,A)$ about $\lambda_0$,
  where $Q_k \in \calL(E)$. Then $Q_{-m}\neq 0$ and
  $\im(Q_{-m})\subseteq\ker (\lambda_0 I-A)$; see \cite[Theorem~2 in
  Section~VIII.8]{Yosida1995}. In particular, $\lambda_0$ is an
  eigenvalue of $A$ and $(\lambda - \lambda_0)^m\Res(\lambda,A)\to
  Q_{-m}$ with respect to the operator norm as $\lambda \downarrow
  \lambda_0$. Hence Assumption~\eqref{eq:asymp-pos-resolvent} implies
  that $Q_{-m}$ is a positive operator. Since $Q_{-m}$ is non-zero its range
  contains a positive non-zero vector and this vector is an eigenvector
  of $A$ corresponding to the eigenvalue $\lambda_0$.

  (ii) Now assume that $A$ is densely defined so that it has a
  well-defined dual operator $A'$. Then $\Res(\lambda,A') =
  \Res(\lambda,A)'$ for all $\lambda \in \resSet(A) = \resSet(A')$, so
  it follows from~\eqref{eq:laurent-expansion} that the Laurent
  expansion of $\Res(\phdot,A')$ about $\lambda_0$ is given by
  \begin{displaymath}
    \Res(\lambda,A') = \sum_{k=-m}^\infty (\lambda- \lambda_0)^k Q_k'.
  \end{displaymath}
  In particular, as $Q_{-m}' \neq 0$, the point $\lambda_0\in\sigma(A')$
  is an $m$-th order pole of $\Res(\phdot,A')$. As $Q_{-m}$ is positive,
  so is $Q_{-m}'$ and hence, $\im(Q_{-m}')$ contains a positive non-zero
  vector. As $\im(Q_{-m}')\subseteq\ker(\lambda_0 I - A')$,
  this proves the assertion as in (ii).
\end{proof}
Let us formulate two corollaries where
Theorem~\ref{thm:existence-of-positive-eigenvectors} is applied to
eventually positive resolvents and to eventually positive semigroups.

First we recall the definition of an eventually positive resolvent from
\cite[Section~8]{Daners2016}. Let $A\colon E \supseteq D(A) \to E$ be a
linear operator on a complex Banach lattice $E$ and let $\lambda_0 \in
\spec(A) \cap \bbR$. We call the resolvent $\Res(\phdot,A)$ of $A$
\emph{individually eventually positive at $\lambda_0$} if for every $f
\in E_+$ there exists $\lambda_1 > \lambda_0$ such that
$(\lambda_0,\lambda_1] \subseteq \resSet(A)$ and $\Res(\lambda,A)f \ge
0$ for all $\lambda \in (\lambda_0,\lambda_1]$. The following corollary
is an immediate consequence of
Theorem~\ref{thm:existence-of-positive-eigenvectors}.
\begin{corollary}
  \label{cor:existence-of-positive-eigenvectors-resolvent}
  Let $A\colon E \supseteq D(A) \to E$ be a linear operator on a complex
  Banach lattice $E$ and let $\lambda_0 \in \spec(A) \cap \bbR$ be a
  pole of the resolvent $\Res(\phdot,A)$. Suppose that the resolvent of
  $A$ is individually eventually positive at $\lambda_0$.

  Then the assertions~{\upshape (i)} and~{\upshape (ii)} of
  Theorem~\ref{thm:existence-of-positive-eigenvectors} are fulfilled.
\end{corollary}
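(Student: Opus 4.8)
The plan is to derive Corollary~\ref{cor:existence-of-positive-eigenvectors-resolvent} directly from Theorem~\ref{thm:existence-of-positive-eigenvectors} by verifying that individual eventual positivity of $\Res(\phdot,A)$ at $\lambda_0$ implies the asymptotic condition~\eqref{eq:asymp-pos-resolvent}. Fix $f \in E_+$. By hypothesis there exists $\lambda_1 > \lambda_0$ with $(\lambda_0,\lambda_1] \subseteq \resSet(A)$ and $\Res(\lambda,A)f \ge 0$ for all $\lambda \in (\lambda_0,\lambda_1]$. The key observation is simply that $\Res(\lambda,A)f \in E_+$ for such $\lambda$, so the distance $\dist(\Res(\lambda,A)f, E_+)$ equals $0$ for every $\lambda$ in the interval $(\lambda_0,\lambda_1]$.

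Consequently the product $(\lambda - \lambda_0)\dist(\Res(\lambda,A)f, E_+)$ is identically zero on $(\lambda_0,\lambda_1]$, and hence trivially converges to $0$ as $\lambda \downarrow \lambda_0$. Since $f \in E_+$ was arbitrary, condition~\eqref{eq:asymp-pos-resolvent} holds for every $f \in E_+$. All the remaining hypotheses of Theorem~\ref{thm:existence-of-positive-eigenvectors}, namely that $E$ is a complex Banach lattice, that $\lambda_0 \in \spec(A) \cap \bbR$ is a pole of the resolvent, and (for assertion~(ii)) that $A$ is densely defined, are already assumed in the statement of the corollary. I would then simply invoke the theorem to obtain conclusions~(i) and~(ii).

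I do not anticipate any genuine obstacle here: the corollary is a weakening of the theorem's hypothesis, since eventual positivity is a strictly stronger condition than the asymptotic positivity encoded in~\eqref{eq:asymp-pos-resolvent} (the former forces the distance to vanish on a whole interval, while the latter only controls its rate of decay). The one point deserving a word of care is that the relevant $\lambda_1$ in the definition of eventual positivity may depend on $f$, but this causes no difficulty because~\eqref{eq:asymp-pos-resolvent} is itself an individual (i.e.\ $f$-by-$f$) condition, so we are free to choose the interval separately for each $f$. Thus the proof reduces to a one-line verification that vanishing dominates decay, followed by a direct appeal to the theorem.
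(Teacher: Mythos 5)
Your proof is correct and matches the paper's intent exactly: the paper calls this corollary an ``immediate consequence'' of Theorem~\ref{thm:existence-of-positive-eigenvectors}, precisely because individual eventual positivity at $\lambda_0$ makes $\dist(\Res(\lambda,A)f,E_+)$ vanish identically on an interval $(\lambda_0,\lambda_1]$ for each $f \in E_+$, so that condition~\eqref{eq:asymp-pos-resolvent} holds trivially. Nothing is missing, and your remark that the $f$-dependence of $\lambda_1$ is harmless (since \eqref{eq:asymp-pos-resolvent} is itself an individual condition) is exactly the right point of care.
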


To formulate the second corollary, we recall the definition of an
eventually positive semigroup from \cite[Section~7]{Daners2016}. Let
$(e^{tA})_{t \ge 0}$ be a $C_0$-semigroup on a complex Banach lattice
$E$. We call this $C_0$-semigroup \emph{individually eventually
  positive} if, for every $f \in E_+$, there exists a time $t_0 \ge 0$
such that $e^{tA}f \ge 0$ for all $t \ge t_0$. We recall from
\cite[Theorem~7.6]{Daners2016} that the spectral bound $\spb(A)$ of the
generator of an individually eventually positive $C_0$-semigroup
$(e^{tA})_{t \ge 0}$ is always contained in the spectrum unless
$\spb(A)=-\infty$.

For individually eventually positive $C_0$-semigroups we obtain the
following corollary of
Theorem~\ref{thm:existence-of-positive-eigenvectors} which is a
generalisation of \cite[Theorem~7.7(a)]{Daners2016} in that it also
yields the existence of a positive eigenvector for the dual operator.

\begin{corollary}
  \label{cor:existence-of-positive-eigenvectors-semigroup}
  Let $(e^{tA})_{t \ge 0}$ be an individually eventually positive
  $C_0$-semigroup on a complex Banach lattice $E$. Suppose that
  $\spb(A)>-\infty$ is a pole of the resolvent $\Res(\phdot,A)$.

  Then the assertions~{\upshape (i)} and~{\upshape (ii)} of
  Theorem~\ref{thm:existence-of-positive-eigenvectors} are fulfilled for
  $\lambda_0 = \spb(A)$.
\end{corollary}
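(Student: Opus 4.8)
The plan is to deduce this corollary from Theorem~\ref{thm:existence-of-positive-eigenvectors} by verifying that individual eventual positivity of the semigroup forces the asymptotic positivity condition~\eqref{eq:asymp-pos-resolvent} at $\lambda_0 := \spb(A)$. The other hypotheses of the theorem come for free: $\lambda_0$ lies in $\spec(A)\cap\bbR$ because it is assumed to be a pole of $\Res(\phdot,A)$, and $A$ is densely defined because it generates a $C_0$-semigroup, so both~(i) and~(ii) will be available once~\eqref{eq:asymp-pos-resolvent} is in hand. Thus the whole task reduces to proving the one limit relation.

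The central manoeuvre I would use is to trade the merely \emph{eventually} positive orbit of $f$ for a \emph{globally} positive orbit. Fix $f\in E_+$ and choose $t_0\ge 0$ with $e^{tA}f\ge 0$ for all $t\ge t_0$, and set $h:=e^{t_0A}f$. Then $e^{sA}h=e^{(s+t_0)A}f\ge 0$ for \emph{every} $s\ge 0$, so the orbit of $h$ is positive for all times. Solving the inhomogeneous differential equation satisfied by $s\mapsto e^{sA}\Res(\mu,A)$ yields, for every $\mu\in\resSet(A)$, the identity
\[
  \Res(\mu,A)f = \int_0^{t_0} e^{-\mu t}\,e^{tA}f\,dt + e^{-\mu t_0}\,\Res(\mu,A)h ,
\]
using that $e^{t_0A}$ commutes with the resolvent. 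The first summand is bounded in norm by a constant $C_f$, uniformly for $\mu$ in a bounded right neighbourhood of $\lambda_0$, since $\int_0^{t_0}\|e^{tA}f\|\,dt<\infty$.

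The crux, and the step I expect to be the main obstacle, is to show that $\Res(\mu,A)h\ge 0$ for \emph{all} $\mu>\lambda_0$. The natural Laplace representation $\Res(\mu,A)h=\int_0^\infty e^{-\mu t}e^{tA}h\,dt$ is a priori valid only for $\mu>\gbd(A)$, and $\gbd(A)$ may strictly exceed $\spb(A)=\lambda_0$, so I cannot simply read off positivity on the whole interval $(\lambda_0,\infty)$. Here I would exploit the global positivity of the orbit of $h$: for an arbitrary $\varphi\in E'_+$ the scalar function $\mu\mapsto\int_0^\infty e^{-\mu t}\langle\varphi,e^{tA}h\rangle\,dt$ is the Laplace transform of the nonnegative function $t\mapsto\langle\varphi,e^{tA}h\rangle$. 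By Landau's theorem (the Laplace-transform analogue of Pringsheim's theorem), its abscissa of convergence is a genuine singularity of its analytic continuation; but that continuation is $\langle\varphi,\Res(\phdot,A)h\rangle$, which is analytic on all of $(\lambda_0,\infty)$. Hence the abscissa is $\le\lambda_0$, the scalar integral converges for every $\mu>\lambda_0$, and its value $\langle\varphi,\Res(\mu,A)h\rangle$ is nonnegative. Since $E'_+$ separates positivity, this gives $\Res(\mu,A)h\ge 0$ for all $\mu>\lambda_0$.

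Combining the two ingredients finishes the argument: from the displayed identity and $e^{-\mu t_0}\Res(\mu,A)h\ge 0$ we obtain $\dist(\Res(\mu,A)f,E_+)\le C_f$ for $\mu$ near $\lambda_0$, so that $(\mu-\lambda_0)\dist(\Res(\mu,A)f,E_+)\le(\mu-\lambda_0)\,C_f\to 0$ as $\mu\downarrow\lambda_0$. This is exactly~\eqref{eq:asymp-pos-resolvent}, and Theorem~\ref{thm:existence-of-positive-eigenvectors} then delivers assertions~(i) and~(ii) for $\lambda_0=\spb(A)$. (Alternatively, one could invoke a known transfer result stating that eventual positivity of the semigroup implies individual eventual positivity of the resolvent at $\spb(A)$ and then cite Corollary~\ref{cor:existence-of-positive-eigenvectors-resolvent}, but the direct route above keeps the Landau-type step explicit.)
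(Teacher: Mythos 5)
Your proof is correct, and its outer skeleton is the same as the paper's: both reduce the corollary to verifying the asymptotic positivity condition~\eqref{eq:asymp-pos-resolvent} at $\lambda_0=\spb(A)$ (the remaining hypotheses of Theorem~\ref{thm:existence-of-positive-eigenvectors} being immediate, exactly as you note) and then invoke that theorem. The difference is how that condition is obtained. The paper disposes of it in one line by citing \cite[Corollary~7.3]{Daners2016}; you prove the required transfer from scratch. Your two ingredients are sound: the splitting identity
\begin{displaymath}
  \Res(\mu,A)f=\int_0^{t_0}e^{-\mu t}e^{tA}f\,dt
  +e^{-\mu t_0}\Res(\mu,A)e^{t_0A}f,\qquad \mu\in\resSet(A),
\end{displaymath}
is the standard integrated resolvent identity, and your Landau--Pringsheim step correctly bridges the genuine gap between $\gbd(A)$ and $\spb(A)$: for each $\varphi\in E'_+$ the abscissa of convergence of the Laplace transform of the nonnegative orbit $t\mapsto\langle\varphi,e^{tA}e^{t_0A}f\rangle$ must be a singularity of the transform, yet $\langle\varphi,\Res(\phdot,A)e^{t_0A}f\rangle$ is analytic on $\{\repart\mu>\spb(A)\}$, so the abscissa is at most $\spb(A)$; the identity theorem and the closedness of $E_+$ then yield $\Res(\mu,A)e^{t_0A}f\ge 0$ for all real $\mu>\spb(A)$, whence $\dist(\Res(\mu,A)f,E_+)$ stays bounded near $\spb(A)$ and \eqref{eq:asymp-pos-resolvent} follows. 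In effect you have reconstructed the proof of the result the paper cites: the paper's route buys brevity at the cost of leaning on the companion paper, while yours is self-contained and makes the analytic mechanism explicit.

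One warning about your closing parenthetical: the ``known transfer result'' alluded to there---that individual eventual positivity of the semigroup implies individual eventual positivity of the \emph{resolvent} at $\spb(A)$, which would let you cite Corollary~\ref{cor:existence-of-positive-eigenvectors-resolvent} instead---is not available and is in fact false in general. Taking the direct sum of the scalar semigroup $e^{t\mu_0}$ with an eventually positive matrix semigroup whose generator $B$ has a negative off-diagonal entry gives a counterexample: for a suitable basis vector $e_j$ one has $\Res(\mu,B)e_j\not\ge 0$ for all large $\mu$, so choosing $\mu_0$ large makes the sum an individually eventually positive semigroup whose resolvent fails to be individually eventually positive at its spectral bound $\mu_0$. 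What is true, and what the paper cites, is precisely the weaker asymptotic statement~\eqref{eq:asymp-pos-resolvent}; consistently, your own decomposition only \emph{bounds} the distance to the cone rather than making it vanish. Since that remark is an aside, your main argument is unaffected.
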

\begin{proof}
  Since the semigroup is individually eventually positive and since
  $\spb(A)>-\infty$ it follows from \cite[Corollary~7.3]{Daners2016}
  that the resolvent of $A$ fulfils
  property~\eqref{eq:asymp-pos-resolvent} in
  Theorem~\ref{thm:existence-of-positive-eigenvectors} for $\lambda_0 =
  \spb(A)$. Hence, the assertion follows from that theorem.
\end{proof}

\section{A Perron--Frobenius theorem for resolvents}
\label{section:pf-theorem-for-resolvents}
In this section we prove a Perron--Frobenius type theorem for eventually
positive resolvents. In contrast to the results of
Section~\ref{section:existence-of-positive-eigenvectors} we prove not
only the existence, but also the uniqueness of positive
eigenvectors. Let us start by recalling that a certain Perron--Frobenius
type property can be characterised by considering the spectral
projection of the eigenvalue under consideration. More precisely, let
$A\colon E \supseteq D(A) \to E$ be a real densely defined linear
operator $A\colon E \supseteq D(A) \to E$ on a complex Banach lattice
$E$, $\lambda_0\in\spec(A)\cap R$ a pole of $\Res(\phdot,A)$ and $u$ a
quasi-interior point of $E_+$. A typical conclusion of such a Perron-Frobenius 
type theorem is:
\begin{equation}
  \label{eq:perron-frobenius}
  \parbox{.87\textwidth}{
    The eigenvalue $\lambda_0$ of $A$ is geometrically simple and the
    corresponding eigen\-space $\ker(\lambda_0 I- A)$ contains a vector $v
    \gg_u 0$. Moreover, the eigenspace $\ker(\lambda_0 I- A')$ of the
    dual operator contains a strictly positive functional.}
\end{equation}
It was shown in \cite[Corollary~3.3]{Daners2016a} that a very concise
way of stating this conclusion is to say that the spectral projection
$P$ associated with $\lambda_0$ fulfills $P\gg_u0$.
Assertion~\eqref{eq:perron-frobenius} also implies
that $\lambda_0$ is algebraically simple and the only eigenvalue with a
positive eigenfunction.

It was further proved in \cite[Theorem~4.4]{Daners2016a} that, under
appropriate spectral assumptions combined with the domination
condition~\eqref{eq:domination-condition}, $P\gg_u0$ is equivalent to a
certain eventual positivity property of the resolvent
$\Res(\phdot,A)$. On the other hand, it was demonstrated in
\cite[Example~5.4]{Daners2016a} that such an equivalence is no longer true
if one drops the condition~\eqref{eq:domination-condition}. However, we
prove in the next theorem that some implications in
\cite[Theorem~4.4]{Daners2016a}, namely ``(ii) or (iii) $\Rightarrow$
(i)'', remain true without \eqref{eq:domination-condition}.
\begin{theorem}
  \label{thm:perron-frobenius-for-resolvent}
  Let $A\colon E \supseteq D(A) \to E$ be a densely defined and real
  linear operator on a complex Banach lattice $E$ and let $u \in E_+$ be
  a quasi-interior point. Assume that $\lambda_0 \in \spec(A) \cap \bbR$
  is a pole of the resolvent $\Res(\phdot,A)$ and denote the
  corresponding spectral projection by $P$.  If $\Res(\phdot,A)$ is
  individually eventually strongly positive or negative with respect to
  $u$ at $\lambda_0$, then $P \gg_u 0$.
\end{theorem}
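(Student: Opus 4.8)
The goal is to establish the Perron--Frobenius property $P \gg_u 0$ for the spectral projection $P$ associated with the pole $\lambda_0$, using only the eventual strong positivity (or negativity) of the resolvent at $\lambda_0$. The key structural fact I would exploit is the Laurent expansion of $\Res(\phdot,A)$ about $\lambda_0$, exactly as in the proof of Theorem~\ref{thm:existence-of-positive-eigenvectors}. Write
\begin{equation*}
  \Res(\lambda,A) = \sum_{k=-m}^\infty (\lambda-\lambda_0)^k Q_k,
\end{equation*}
where $m \ge 1$ is the pole order and $Q_{-m} \neq 0$. The spectral projection is $P = Q_{-1}$, and I expect the decisive technical point to be the relationship between $P$ and the leading coefficient $Q_{-m}$, together with showing $m = 1$ under the stated hypothesis.

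The plan is to proceed in three stages. First I would reduce to the strongly positive case; the strongly negative case follows by replacing $\lambda$ with $2\lambda_0 - \lambda$ (equivalently, working with $-\Res$ to the left of $\lambda_0$), which flips the sign appropriately. Second, I would use the eventual strong positivity to pin down the leading Laurent coefficient. For fixed $0 < f \in E_+$, strong positivity gives $\Res(\lambda,A)f \ge \varepsilon(\lambda) u$ for $\lambda$ slightly above $\lambda_0$; multiplying by $(\lambda-\lambda_0)^m$ and letting $\lambda \downarrow \lambda_0$ shows $Q_{-m}f \ge 0$, so $Q_{-m}$ is positive, and in fact a sharper scaling argument should force the strong positivity to concentrate at the correct power. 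The main obstacle, which I expect to be the heart of the proof, is showing that the pole is \emph{simple}, i.e. $m = 1$, so that $P = Q_{-1} = Q_{-m}$ inherits the strong positivity; if $m > 1$ one must rule out the leading term dominating in a way incompatible with strong positivity relative to $u$.

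To handle the simplicity of the pole I would argue by contradiction, supposing $m \ge 2$. The relation $(\lambda-\lambda_0)^m \Res(\lambda,A)f \to Q_{-m}f$ in norm, combined with $\Res(\lambda,A)f \gg_u 0$, should be leveraged against the algebraic identities among the Laurent coefficients: one has $Q_{-m} = (\lambda_0 I - A)^{m-1} Q_{-1}$ up to sign and nilpotency relations, and $\im(Q_{-m}) \subseteq \ker(\lambda_0 I - A)$. The contradiction should emerge from comparing the strongly positive leading behaviour of $\Res(\lambda,A)f$ with the constraint that $Q_{-m}$ maps into the eigenspace while the eventual strong positivity must hold for the \emph{full} resolvent; intuitively, a higher-order pole produces a blow-up rate that cannot be uniformly dominated below by $u$ in a consistent way across all $f$. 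Once $m = 1$ is established, $P = Q_{-1}$ is positive and, refining the estimate, strongly positive, i.e. $Pf \gg_u 0$ for all $0 < f \in E_+$.

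Finally, with $P \gg_u 0$ in the operator sense (meaning $Pf \gg_u 0$ for every $0 < f$), the full conclusion follows by invoking \cite[Corollary~3.3]{Daners2016a}, which is quoted in the discussion preceding the theorem as the statement that $P \gg_u 0$ is precisely the concise reformulation of the Perron--Frobenius conclusion \eqref{eq:perron-frobenius}. The geometric simplicity of $\lambda_0$, the existence of an eigenvector $v \gg_u 0$ in $\ker(\lambda_0 I - A)$, and the strictly positive eigenfunctional of $A'$ are thus all encoded in $P \gg_u 0$, so no separate argument for each is needed. The essential new content over Theorem~\ref{thm:existence-of-positive-eigenvectors} is upgrading positivity of $Q_{-m}$ to strong positivity of $P$ and, crucially, the simplicity of the pole, which is where I anticipate spending the most care.
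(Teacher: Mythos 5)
There is a genuine gap, and it sits exactly where you anticipated ``spending the most care''. Your central mechanism --- multiply $\Res(\lambda,A)f \ge \varepsilon(\lambda)u$ by $(\lambda-\lambda_0)^m$ and let $\lambda\downarrow\lambda_0$ --- can only ever produce \emph{positivity} of Laurent coefficients, never strong positivity: in the \emph{individual} setting the constant $\varepsilon$ depends on both $f$ and $\lambda$, so the lower bound $(\lambda-\lambda_0)^m\varepsilon(\lambda)u$ simply tends to $0$ in the limit. Hence ``refining the estimate'' to get $Q_{-m}\gg_u 0$, or later $P\gg_u 0$, cannot be done by any limiting argument; this is precisely why Theorem~\ref{thm:existence-of-positive-eigenvectors} stops at a positive (not strongly positive) eigenvector. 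Likewise your contradiction argument for $m=1$ is never actually given: ``a blow-up rate that cannot be uniformly dominated below by $u$'' is not a proof, and none of the coefficient identities you cite yields the contradiction by itself. (A correct version does exist --- if $m\ge 2$ then $Q_{-m}^2=Q_{-(2m-1)}=0$, while one can show $Q_{-m}f\gg_u 0$ whenever $Q_{-m}f\neq 0$, because $\im (Q_{-m})\subseteq\ker(\lambda_0 I-A)$ and every positive non-zero eigenvector satisfies the \emph{exact} identity $w=\lambda\Res(\lambda,A)w\gg_u 0$ --- but this rests on eigenvector identities of exactly the kind your sketch never invokes.) Finally, even granting $m=1$ and $P\ge 0$, concluding $Pf\gg_u 0$ for \emph{every} $0<f$ requires ruling out $Pf=0$, which needs a strictly positive eigenfunctional of $A'$; your last paragraph declares that functional to be ``encoded in $P\gg_u 0$, so no separate argument is needed'', which is circular, since $P\gg_u 0$ is what is being proved.

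For comparison, the paper's proof avoids limits of Laurent coefficients entirely. It takes the positive eigenvector $v$ and eigenfunctional $\varphi$ supplied by Corollary~\ref{cor:existence-of-positive-eigenvectors-resolvent}, upgrades them via exact resolvent identities ($w=\lambda\Res(\lambda,A)w\gg_u 0$ for every $0<w\in\ker A$, and $\langle\varphi,f\rangle=\langle\varphi,\lambda\Res(\lambda,A)f\rangle>0$ since $\lambda\Res(\lambda,A)f$ is quasi-interior), proves that $E_\bbR\cap\ker A$ is a sublattice by combining Lemma~\ref{lem:modulus-estimate} with the strictly positive $\varphi$, and then obtains one-dimensionality of $\ker A$ from the fact that a Banach lattice in which every positive non-zero element is quasi-interior is one-dimensional. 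Only after all of \eqref{eq:perron-frobenius} is established does it conclude $P\gg_u 0$, via the equivalence in \cite[Corollary~3.3]{Daners2016a}. The dimension count and the dual eigenfunctional are where the real work lies, and your proposal has no mechanism playing either role.
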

The proof of the implications ``(ii) or (iii) $\Rightarrow$ (i)'' in
\cite[Theorem~4.4]{Daners2016a} cannot simply be adapted to work in our
more general setting here. The major obstacle is that
\cite[Lemma~4.8]{Daners2016a} relies on the domination
condition~\eqref{eq:domination-condition}. Here, we use a different
approach which has been inspired by the proof of
\cite[Proposition~B-III.3.5]{arendt1986}. We also need a simple
auxiliary result which was implicitly contained in the proof of
\cite[Lemma~7.4]{Daners2016}.
\begin{lemma}
  \label{lem:modulus-estimate}
  Let $E$ be a complex Banach lattice and let $(T_j)_{j \in J} \subseteq
  \calL(E)$ an individually eventually positive net of operators, in the
  sense that for all $f \in E_+$ there exists $j_0 \in J$ such that
  $T_jf \ge 0$ for all $j \ge j_0$.  Then, for every $f$ in the real
  part $E_\bbR$ of $E$, there exists $j_1 \in J$ such that $|T_jf| \le
  T_j|f|$ for all $j \ge j_1$.
\end{lemma}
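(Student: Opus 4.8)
The plan is to reduce the statement for a general real element $f$ to its positive and negative parts and then exploit the fact that $J$ is a directed set. First I would write $f = f^+ - f^-$ with $f^+, f^- \in E_+$, so that $|f| = f^+ + f^-$. Applying the individual eventual positivity hypothesis to the positive element $f^+$ produces an index $j_0^+ \in J$ such that $T_j f^+ \ge 0$ for all $j \ge j_0^+$; applying it to $f^-$ produces, in the same way, an index $j_0^- \in J$ with $T_j f^- \ge 0$ for all $j \ge j_0^-$.

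Next I would invoke the directedness of $J$ to choose a single $j_1 \in J$ with $j_1 \ge j_0^+$ and $j_1 \ge j_0^-$. For every $j \ge j_1$ both $T_j f^+$ and $T_j f^-$ are then positive, and hence $|T_j f^+| = T_j f^+$ and $|T_j f^-| = T_j f^-$. Using the linearity of $T_j$ together with the elementary lattice inequality $|a - b| \le |a| + |b|$, I would conclude, for all $j \ge j_1$,
\[
  |T_j f| = |T_j f^+ - T_j f^-| \le |T_j f^+| + |T_j f^-|
          = T_j f^+ + T_j f^- = T_j(f^+ + f^-) = T_j |f|,
\]
which is exactly the asserted estimate.

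This argument is essentially routine and I do not expect a serious obstacle. The only point deserving a moment of care is that the two thresholds $j_0^+$ and $j_0^-$ arising from the two separate applications of eventual positivity must be merged into one; this is precisely what the directedness of the net supplies. I would emphasise that positivity of the operators $T_j$ enters only through dropping the moduli on the positive elements $T_j f^+$ and $T_j f^-$, while the remaining ingredients are just linearity and the standard lattice identities $|f| = f^+ + f^-$ and $|a - b| \le |a| + |b|$.
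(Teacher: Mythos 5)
Your proof is correct and takes essentially the same approach as the paper's: decompose $f = f^+ - f^-$, apply the eventual positivity hypothesis to $f^+$ and $f^-$ separately, merge the two thresholds using directedness of $J$, and conclude by elementary lattice algebra. The only cosmetic difference is the last step, where the paper deduces $T_j|f| \ge \pm T_jf$ from $T_j(|f| \pm f) \ge 0$ and takes a supremum, while you use $|a-b| \le |a| + |b|$ together with $|T_jf^{\pm}| = T_jf^{\pm}$; both are equally valid.
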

\begin{proof}
  Choose $j_1 \in J$ such that $T_jf^+ \ge 0$ and $T_jf^- \ge 0$ for all
  $j \ge j_1$. For all those $j$ we then obtain $T_j(|f|+f) = 2T_jf^+
  \ge 0$ and $T_j(|f|-f) = 2T_jf^- \ge 0$. Hence, $T_j|f| \ge -T_jf$ and
  $T_j|f| \ge T_jf$ and thus $T_j|f| \ge |T_jf|$ for all $j \ge j_1$.
\end{proof}

\begin{proof}[Proof of Theorem~\ref{thm:perron-frobenius-for-resolvent}]
  We may assume throughout the proof that $\lambda_0 = 0$. Suppose that
  $\Res(\phdot,A)$ is individually eventually strongly positive with
  respect to $u$ at $\lambda_0 = 0$. We are going to show that
  \eqref{eq:perron-frobenius} is fulfilled.

  According to
  Corollary~\ref{cor:existence-of-positive-eigenvectors-resolvent} we
  can find vectors $0 < v \in \ker A$ and $0 < \varphi \in \ker A'$. We
  observe that \emph{every} element $0 < w \in \ker A$ fulfils $w \gg_u
  0$. Indeed, by assumption, for each such $w$ we can find a number
  $\lambda > 0$ for which $\lambda \in \resSet(A)$ and $w = \lambda
  \Res(\lambda,A)w \gg_u 0$. Therefore, $v \gg_u 0$.

  Next we show that the functional $\varphi$ is strictly positive. For
  every $0 < f \in E$ we can find a number $0<\lambda\in\rho(A)$ such that
  $\lambda \Res(\lambda,A)f \gg_u 0$; in particular, $\lambda
  \Res(\lambda,A)f$ is a quasi-interior point of $E_+$. Hence,
  \begin{displaymath}
    \langle \varphi, f \rangle = \langle \lambda
    \Res(\lambda,A')\varphi,f\rangle = \langle \varphi, \lambda
    \Res(\lambda,A)f \rangle > 0.
  \end{displaymath}
  Thus, $\varphi$ is indeed strictly positive.

  It remains to show that $\ker A$ is one-dimensional. To this end, we
  first prove that $E_\bbR \cap \ker A$ is a sublattice of the real part
  $E_\bbR$ of $E$. Fix $w \in E_\bbR \cap \ker A$. According to
  Lemma~\ref{lem:modulus-estimate} we can find a number $0<\lambda\in\rho(A)$
  such that $|w| = \lambda |\Res(\lambda,A)w| \leq \lambda
  \Res(\lambda,A)|w|$. By testing the positive vector $\lambda
  \Res(\lambda,A)|w| - |w|$ against the strictly positive functional
  $\varphi \in \ker A'$ we obtain
  \begin{displaymath}
    \langle \varphi, \lambda \Res(\lambda,A)|w| - |w| \rangle = \langle
    \lambda \Res(\lambda,A')\varphi, |w| \rangle - \langle
    \varphi,|w|\rangle = 0
  \end{displaymath}
  and thus, $\lambda \Res(\lambda,A)|w| = |w|$. This proves that $|w|
  \in E_\bbR \cap \ker A$, so $E_\bbR \cap \ker A$ is indeed a sublattice of
  $E_\bbR$.

  We have seen above that every non-zero positive vector in $w \in \ker
  A$ fulfils $w \gg_u 0$ and is thus a quasi-interior point of
  $E_\bbR$. Hence, according to \cite[Corollary 2 to Theorem
  II.6.3]{Schaefer1974}, $v$ is also a quasi-interior point of the
  positive cone of the Banach lattice $E_\bbR \cap \ker A$ (when endowed
  with the norm inherited from $E_\bbR$). We have thus shown that every
  positive non-zero element of the real Banach lattice $E_\bbR \cap \ker
  A$ is a quasi-interior point of its positive cone. This implies that
  $E_\bbR \cap \ker A$ is one-dimensional; see \cite[Lemma
  5.1]{Lotz1968} or \cite[Remark~5.9]{GlueckGR}. Since $A$ is real, we
  have $\ker A = E_\bbR \cap \ker A + iE_\bbR \cap \ker A$, so we
  conclude that $\ker A$ is one-dimensional over the complex field. This
  proves \eqref{eq:perron-frobenius}.

  Now assume instead that $\Res(\phdot,A)$ is individually eventually
  strongly negative with respect to $u$ at $\lambda_0 = 0$. Then the
  resolvent of $-A$, which is given by $\Res(\lambda,-A) =
  -\Res(-\lambda,A)$ for all $\lambda \in \resSet(-A) = -\resSet(A)$, is
  individually eventually strongly positive with respect to $u$ at
  $0$. Hence, by what we have just seen, the spectral projection of $-A$
  associated with $0$ is strongly positive with respect to $u$.  This
  spectral projection coincides with $P$, which proves the assertion
  by what we have shown above.
\end{proof}

\section{A Perron--Frobenius theorem for semigroups}
\label{section:pf-theorem-for-semigroups}

In this final section we pursue a similar goal as in
Section~\ref{section:pf-theorem-for-resolvents}, but this time for
eventually positive semigroups instead of resolvents. In
\cite[Theorem~5.2]{Daners2016a} it was shown that, under some
assumptions which include the smoothing
condition~\eqref{eq:smoothing-condition}, individual eventual strong
positivity with respect to $u$ of a semigroup $(e^{tA})_{t \ge 0}$ is
equivalent to a certain spectral condition that includes the
Perron--Frobenius properties discussed at the start of the previous
section. As demonstrated in \cite[Example~5.4]{Daners2016a} this results
fails in general if the smoothing
condition~\eqref{eq:smoothing-condition} is dropped. However, we are now
going to prove that at least a certain part of
\cite[Theorem~5.2]{Daners2016a} remains true without the
condition~\eqref{eq:smoothing-condition}.

\begin{theorem}
  \label{thm:perron-frobenius-for-semigroup}
  Let $(e^{tA})_{t \ge 0}$ be a real $C_0$-semigroup on a complex Banach
  lattice $E$ and let $u \in E_+$ be a quasi-interior point. Assume that
  $\spb(A)$ is not equal to $-\infty$ and a pole of the resolvent
  $\Res(\phdot,A)$.  If $(e^{tA})_{t\ge 0}$ is individually eventually
  strongly positive with respect to $u$, then the spectral projection
  $P$ corresponding to $\spb(A)$ fulfils $P \gg_u 0$.
\end{theorem}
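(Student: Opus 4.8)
The plan is to reduce the semigroup case to the resolvent case already proved in Theorem~\ref{thm:perron-frobenius-for-resolvent}. Set $\lambda_0 = \spb(A)$; by assumption $\lambda_0 \in \bbR$, it is not $-\infty$, and it is a pole of $\Res(\phdot,A)$. Since the semigroup is real, so is its generator $A$, and $P$ is the spectral projection associated with the pole $\lambda_0$. The key observation is that individual eventual strong positivity of the semigroup should transfer to individual eventual strong positivity of the resolvent at $\lambda_0$, at which point Theorem~\ref{thm:perron-frobenius-for-resolvent} delivers exactly the conclusion $P \gg_u 0$.

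First I would recall the integral representation of the resolvent via the Laplace transform: for $\repart\lambda$ sufficiently large (in particular for $\lambda > \lambda_0$ real, once we know the resolvent exists there),
\begin{displaymath}
  \Res(\lambda,A)f = \int_0^\infty e^{-\lambda t} e^{tA} f \, dt.
\end{displaymath}
For a fixed $0 < f \in E$, individual eventual strong positivity gives a time $t_0 \geq 0$ with $e^{tA}f \gg_u 0$ for all $t \geq t_0$. The heuristic is that for $\lambda$ just above $\lambda_0$, the tail of this integral (over $[t_0,\infty)$), where the integrand is strongly positive, dominates, so that $\Res(\lambda,A)f \gg_u 0$. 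The main obstacle is making this domination rigorous: one must control the finite piece $\int_0^{t_0} e^{-\lambda t} e^{tA}f\,dt$, which need not be positive, and show it is absorbed by the strongly positive tail. A clean way is to use the resolvent identity together with positivity: write $\Res(\lambda,A)f$ for large $\lambda$ and exploit that as $\lambda \downarrow \lambda_0$ the behaviour is governed by the Laurent coefficients at the pole, exactly as in the proof of Theorem~\ref{thm:existence-of-positive-eigenvectors}.

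Alternatively, and more robustly, I would avoid proving full eventual strong positivity of the resolvent and instead verify directly the hypotheses of Theorem~\ref{thm:existence-of-positive-eigenvectors} plus the extra structural facts needed for geometric simplicity. Concretely, let $m$ be the pole order and $Q_{-m}$ the leading Laurent coefficient, so $(\lambda-\lambda_0)^m \Res(\lambda,A) \to Q_{-m}$ in operator norm as $\lambda \downarrow \lambda_0$. The Laplace representation shows that for $0 < f \in E$ the vector $(\lambda-\lambda_0)^m \Res(\lambda,A)f$ is, up to a strongly-positive-dominant error, positive, whence $Q_{-m} \geq 0$ and in fact $Q_{-m}f \gg_u 0$ whenever $Q_{-m}f \neq 0$; this is the semigroup analogue of the ``every nonzero positive element of $\ker A$ is $\gg_u 0$'' step. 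From here the argument from Theorem~\ref{thm:perron-frobenius-for-resolvent} can be reproduced: one obtains a positive eigenvector $v \gg_u 0$ and, via $A'$, a strictly positive eigenfunctional $\varphi$; then Lemma~\ref{lem:modulus-estimate} applied to the net $\big((\lambda-\lambda_0)^m\Res(\lambda,A)\big)_{\lambda \downarrow \lambda_0}$ (which is individually eventually positive) yields that $E_\bbR \cap \ker(\lambda_0 I - A)$ is a sublattice, and testing against $\varphi$ forces it to be one-dimensional.

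I expect the hard part to be the transfer step itself: establishing that the eventual strong positivity of $(e^{tA})_{t\ge0}$ implies individual eventual strong positivity of $\Res(\phdot,A)$ at $\lambda_0$, i.e.\ that $\Res(\lambda,A)f \gg_u 0$ for $\lambda$ slightly above $\lambda_0$. Once this is done the theorem follows immediately from Theorem~\ref{thm:perron-frobenius-for-resolvent}. To carry out the transfer I would fix $0 < f \in E$, pick $t_0$ with $e^{tA}f \gg_u 0$ for $t \geq t_0$, and use that the resolvent of the rescaled generator, or equivalently the Laplace transform restricted to the strongly positive tail, produces a term of the form $\int_{t_0}^\infty e^{-\lambda t} e^{tA} f \, dt$ which is $\gg_u 0$ with a gauge-norm lower bound blowing up as $\lambda \downarrow \lambda_0$ (because $\lambda_0 \in \spec(A)$ forces the resolvent to be unbounded there), while the complementary finite integral stays bounded in $E$; comparing these in the gauge norm and using that $\gg_u 0$ is stable under adding a bounded perturbation to a term with large strong-positivity margin gives $\Res(\lambda,A)f \gg_u 0$ for $\lambda$ close enough to $\lambda_0$. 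This is precisely the kind of comparison already used implicitly in \cite{Daners2016}, and it is the single place where semigroup-specific reasoning, rather than the abstract resolvent theorem, is needed.
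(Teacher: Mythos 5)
Your central reduction---transferring individual eventual strong positivity from the semigroup to the resolvent at $\spb(A)$ and then invoking Theorem~\ref{thm:perron-frobenius-for-resolvent}---is precisely the step that the smoothing condition~\eqref{eq:smoothing-condition} exists to enable, and your proposed justification does not close it. The ``absorption'' argument is invalid: from $\int_{t_0}^\infty e^{-\lambda t}e^{tA}f\,dt \ge c(\lambda)u$ and an $E$-norm bound on $\int_0^{t_0}e^{-\lambda t}e^{tA}f\,dt$ one cannot conclude that the sum dominates $\delta u$ for some $\delta>0$, because boundedness (or even smallness) in the norm of $E$ gives no lower order bound of the form $h \ge -\delta u$; that kind of control is exactly what the gauge norm on $E_u$ provides, and it is unavailable without~\eqref{eq:smoothing-condition}. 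The claimed blow-up of the margin $c(\lambda)$ is also unfounded: $\lambda_0 \in \spec(A)$ forces $\|\Res(\lambda,A)\|$ to blow up in \emph{operator} norm, but for a fixed $f$ the best available margin is of the form $\int_{t_0}^\infty e^{-\lambda t}\epsilon_t\,dt$, where $e^{tA}f \ge \epsilon_t u$, and this can stay bounded as $\lambda \downarrow \lambda_0$ since $\epsilon_t$ may decay arbitrarily fast. (A further, secondary, issue: the Laplace representation of $\Res(\lambda,A)$ is guaranteed only for $\repart\lambda$ above the abscissa of convergence of the integral, which can exceed $\spb(A)$; mere existence of the resolvent at real $\lambda>\lambda_0$ does not give convergence of the integral there.) Your fallback route inherits the same gap twice over: $Q_{-m}f \gg_u 0$ cannot be obtained as a norm limit of vectors that are $\gg_u 0$, because the strong-positivity margin degenerates in the limit---this argument yields only $Q_{-m}f \ge 0$, i.e.\ nothing beyond Theorem~\ref{thm:existence-of-positive-eigenvectors}---and the assertion that the net $\bigl((\lambda-\lambda_0)^m\Res(\lambda,A)\bigr)_{\lambda\downarrow\lambda_0}$ is individually eventually positive is again the unproven transfer: what is known from Corollary~7.3 of \cite{Daners2016} is only the asymptotic statement $(\lambda-\lambda_0)\dist(\Res(\lambda,A)f,E_+)\to 0$, not positivity of $\Res(\lambda,A)f$ for $\lambda$ near $\lambda_0$.

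The paper's proof avoids the transfer entirely by applying the eventual positivity hypothesis only to vectors that are \emph{fixed} by the semigroup, so that no limits, integrals or margin estimates are needed. If $0 < w \in \ker A$ (such $w$ exists by Corollary~\ref{cor:existence-of-positive-eigenvectors-semigroup}), then $e^{tA}w = w$ for all $t \ge 0$, so choosing $t$ beyond the time $t_0$ given by the hypothesis for $w$ yields $w = e^{tA}w \gg_u 0$ directly; similarly, strict positivity of $0 < \varphi \in \ker A'$ follows from $\langle\varphi,x\rangle = \langle\varphi,e^{tA}x\rangle > 0$, and the sublattice property of $E_\bbR\cap\ker A$ follows from Lemma~\ref{lem:modulus-estimate} applied to the net $(e^{tA})_{t\ge 0}$ itself (whose individual eventual positivity \emph{is} the hypothesis), combined with the semigroup law to pass from $e^{tA}|w|=|w|$ for $t \ge t_1$ to all $t \ge 0$. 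If you want to keep the spirit of your reduction, replace the Laplace-transform transfer by this fixed-point observation; as written, your argument does not prove the theorem.
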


For a similar reason as in
Section~\ref{section:pf-theorem-for-resolvents} we cannot simply modify
the relevant part of the proof of \cite[Theorem~5.2]{Daners2016a}.
Instead we adapt the argument in the proof of
Theorem~\ref{thm:perron-frobenius-for-resolvent} for semigroups.

\begin{proof}[Proof of Theorem~\ref{thm:perron-frobenius-for-semigroup}]
  We may assume throughout that $\spb(A) = 0$. According to
  Corollary~\ref{cor:existence-of-positive-eigenvectors-semigroup} there
  exists a vector $0 < v \in \ker A$ and a functional $0 < \varphi \in
  \ker A'$. To prove \eqref{eq:perron-frobenius} we now proceed
  similarly as in the proof of
  Theorem~\ref{thm:perron-frobenius-for-resolvent} with
  $\lambda_0=\spb(A)=0$.

  First note that every vector $0 < w \in \ker A$ fulfils $w \gg_u
  0$. Indeed, for each such vector we can find a time $t \ge 0$ for
  which we have $w = e^{tA}w \gg_u 0$. In particular we have $v \gg_u
  0$. Next we prove that the functional $\varphi$ is strictly
  positive. To this end, let $0 < x \in E$. We can find a time $t \ge 0$
  such that $e^{tA}x \gg_u 0$, so $e^{tA}x$ is a quasi-interior point of
  $E_+$. Hence, as $\varphi$ is non-zero we obtain
  \begin{displaymath}
    \langle \varphi, x \rangle = \langle (e^{tA})'\varphi,x\rangle
    = \langle \varphi, e^{tA}x \rangle > 0,
  \end{displaymath}
  which shows that $\varphi$ is indeed strictly positive.

  To conclude the proof, we still have to show that $\ker A$ is
  one-dimensional. As in the proof of
  Theorem~\ref{thm:perron-frobenius-for-resolvent}, let us first show
  that $E_\bbR \cap \ker A$ is a sublattice of $E_\bbR$. So, take $w \in
  E_\bbR \cap \ker A$ and choose a time $t_1 \ge 0$ such that $|w| = |e^{tA}w|
  \le e^{tA}|w|$ for all $t \ge t_1$; such a time $t_1$ exists according to
  Lemma~\ref{lem:modulus-estimate}. For $t \ge t_1$ we test the positive vector
  $e^{tA}|w| - |w|$ against the strictly positive functional $\varphi
  \in \ker A'$, thus obtaining
  \begin{displaymath}
    \langle \varphi, e^{tA}|w| - |w|\rangle = \langle
    (e^{tA})'\varphi,|w|\rangle - \langle \varphi,|w| \rangle = 0
  \end{displaymath}
  and hence $e^{tA}|w| = |w|$. For every $t \ge 0$ this implies $e^{tA}|w| =
  e^{tA}e^{t_1A}|w| = e^{(t+t_1)A}|w| = |w|$. Therefore, $|w| \in \ker A$, so 
  $E_\bbR \cap \ker A$
  is indeed a sublattice of $E_\bbR$. Now the same arguments as in the
  proof of Theorem~\ref{thm:perron-frobenius-for-resolvent} show that
  $\ker A$ is indeed one-dimensional.
\end{proof}
If the peripheral spectrum of $A$ is finite and consists of poles of the
resolvent and if the smoothing condition~\eqref{eq:smoothing-condition}
is fulfilled, then \cite[Theorem~5.2]{Daners2016a} asserts, among other
things, that individual eventual strong positivity of $(e^{tA})_{t \ge
  0}$ with respect to $u$ implies that the semigroup
$(e^{t(A-\spb(A))})_{t \ge 0}$ is bounded. It is an interesting question
whether this result remains true without the
condition~\eqref{eq:smoothing-condition}. This does not even seem to be
clear if the semigroup is strongly positive with respect to $u$, that
is, if $e^{tA} \gg_u 0$ for \emph{all} $t > 0$.

If, however, the semigroup under consideration is eventually norm
continuous, then the situation is much simpler. In this case we obtain
the following corollary which shows that the implication ``(i)
$\Rightarrow$ (ii)'' in \cite[Corollary~5.3]{Daners2016a} is true under
weaker assumptions than stated there.
\begin{corollary}
  \label{cor:perron-frobenius-for-semigroup}
  Let $(e^{tA})_{t \ge 0}$ be a real and eventually norm-continuous
  $C_0$-semigroup on a complex Banach lattice $E$ and let $u \in E_+$ be
  a quasi-interior point. Assume that $\spb(A) > -\infty$ and that the
  peripheral spectrum of $A$ is finite and consists of poles of the
  resolvent.

  If $(e^{tA})_{t\ge 0}$ is individually eventually strongly positive
  with respect to $u$, then the rescaled semigroup
  $(e^{t(A-\spb(A))})_{t \ge 0}$ is bounded, the spectral bound
  $\spb(A)$ is a dominant spectral value of $A$ and the corresponding
  spectral projection $P$ fulfils $P \gg_u 0$.
\end{corollary}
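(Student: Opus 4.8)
The plan is to normalise first and then treat the three conclusions in the order $P\gg_u0$, dominance, boundedness. Replacing $A$ by $A-\spb(A)$ I may assume $\spb(A)=0$; this rescaling leaves the semigroup real, eventually norm-continuous and individually eventually strongly positive with respect to $u$, and it does not change the spectral projection $P$. Since the semigroup is in particular individually eventually positive and $\spb(A)=0>-\infty$, \cite[Theorem~7.6]{Daners2016} gives $0\in\spec(A)$, so $0\in\spec_{\per}(A)$ and hence, by hypothesis, $0$ is a pole of $\Res(\phdot,A)$. Theorem~\ref{thm:perron-frobenius-for-semigroup} now applies and yields $P\gg_u0$, which is the last assertion. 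As recorded after~\eqref{eq:perron-frobenius}, $P\gg_u0$ forces $0$ to be algebraically simple, so $P$ is the rank-one projection $Pf=\langle\varphi,f\rangle\,v$ with $v\gg_u0$ spanning $\ker A$ and $0<\varphi\in\ker A'$ strictly positive, exactly as produced in the proofs of Theorem~\ref{thm:perron-frobenius-for-semigroup} and Corollary~\ref{cor:existence-of-positive-eigenvectors-semigroup}.

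For dominance the decisive tool is the spectral mapping theorem for eventually norm-continuous semigroups, $\spec(e^{sA})\setminus\{0\}=e^{s\spec(A)}$ \cite[Theorem~IV.3.10]{Engel2000}, together with the equality $\gbd(A)=\spb(A)=0$ that is a consequence of it. Fix $s>0$ and set $T:=e^{sA}$. For each $0<f\in E$ individual eventual strong positivity gives $T^nf=e^{nsA}f\gg_u0$ for all large $n$, so $T$ is an individually eventually strongly positive operator with respect to $u$; moreover $\spr(T)=e^{s\gbd(A)}=1$, the relations $Tv=v$ and $T'\varphi=\varphi$ hold, and $1$ is a pole of $\Res(\phdot,T)$ (the points of $\spec(A)$ mapping onto $1$ lie in the finite peripheral spectrum and are poles of $\Res(\phdot,A)$, so the corresponding Riesz projections add up to a pole of $\Res(\phdot,T)$). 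I would then invoke the Perron--Frobenius spectral theory for operators with eventually positive powers from \cite{GlueckEPO} to conclude that the peripheral spectrum of $T$ is $\{1\}$. Consequently, if $i\beta\in\spec_{\per}(A)$ with $\beta\in\bbR$, then $e^{is\beta}\in\spec(T)\cap\{z\in\bbC:|z|=1\}=\{1\}$, that is $s\beta\in2\pi\bbZ$; as this holds for every $s>0$, necessarily $\beta=0$, so $\spec_{\per}(A)=\{0\}$ and $\spb(A)=0$ is dominant.

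Given dominance, boundedness is where eventual norm-continuity makes the argument much simpler than in \cite[Theorem~5.2]{Daners2016a}. Decompose $E=\im P\oplus\ker P$; on $\im P$ one has $e^{tA}P=P$ since $Av=0$, while on $E_1:=\ker P$ the part $A_1$ of $A$ satisfies $\spec(A_1)=\spec(A)\setminus\{0\}$. By \cite[Theorem~II.4.18]{Engel2000} the set $\spec(A)\cap\{\repart\lambda\ge-1\}$ is bounded, hence compact, and it meets the imaginary axis only at $0\notin\spec(A_1)$; therefore $\repart\lambda<0$ on this compact set and $\spb(A_1)<0$. Eventual norm-continuity also gives $\gbd(A_1)=\spb(A_1)<0$, so $(e^{tA_1})_{t\ge0}$ decays exponentially and is in particular bounded; combined with $e^{tA}P=P$ this shows that $(e^{tA})_{t\ge0}$ is bounded. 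Undoing the normalisation, the rescaled semigroup $(e^{t(A-\spb(A))})_{t\ge0}$ is bounded, completing the proof.

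The step I expect to be the main obstacle is dominance, i.e.\ excluding peripheral spectrum other than $\spb(A)$. The reduction above offloads it cleanly onto the discrete theory of \cite{GlueckEPO}, but this hinges on matching hypotheses --- in particular on $1$ being a genuine pole of $\Res(\phdot,T)$ and on the strong-positivity assumption there being exactly our $\gg_u$-condition. A self-contained alternative is the classical rotation argument: for a peripheral eigenvector $h$ with $e^{tA}h=e^{i\beta t}h$ one would show that the modulus $|h|$ is fixed by the semigroup (whence $|h|=c\,v\gg_u0$), represent the principal ideal $E_{|h|}$ as a space $C(K)$, and derive a contradiction at the angle equal to $\arg(h/|h|)$ at a point where $u$ does not vanish. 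The delicate point there is precisely establishing $e^{tA}|h|=|h|$: Lemma~\ref{lem:modulus-estimate} only supplies the modulus estimate for real vectors, and the complex estimate $|e^{tA}h|\le e^{tA}|h|$ would require eventual positivity simultaneously for the whole compact family $\{\,|h|-\repart(e^{-i\theta}h):\theta\in[0,2\pi]\,\}$, which does not follow from individual eventual positivity without an additional uniformity argument.
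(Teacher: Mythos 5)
Your opening step ($P \gg_u 0$ via Theorem~\ref{thm:perron-frobenius-for-semigroup}, after normalising $\spb(A)=0$) is exactly the paper's, and your final step (boundedness \emph{given} dominance, by splitting $E = \im P \oplus \ker P$ and using eventual norm continuity to get $\gbd$ of the part of $A$ in $\ker P$ negative) is sound. The genuine gap is precisely where you yourself flagged it: the dominance step. No result of the form you need is available in \cite{GlueckEPO}, namely that an operator $T$ whose powers are \emph{individually} eventually strongly positive with respect to $u$ and whose spectral radius $1$ is a pole of $\Res(\phdot,T)$ must have peripheral spectrum $\{1\}$. The dominance-type results in the discrete theory (and their continuous-time counterpart, \cite[Theorem~8.3]{Daners2016a}) require either uniform eventual positivity, a smoothing hypothesis of the type $T^mE \subseteq E_u$ --- which is exactly the assumption this paper sets out to remove --- or power-boundedness combined with individual eventual positivity (i.e.\ asymptotic positivity). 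Since your boundedness argument takes dominance as input, inserting the correct power-boundedness hypothesis into the cited discrete results makes the proposal circular: dominance needs boundedness, boundedness needs dominance. That individual eventual strong positivity with a pole at $\spb(A)$ does not by itself yield control of the rest of the peripheral spectrum is the very reason Theorems~\ref{thm:perron-frobenius-for-resolvent} and~\ref{thm:perron-frobenius-for-semigroup} assert only $P \gg_u 0$; and your fallback rotation argument is, as you correctly diagnose, blocked by the uniformity problem in the complex modulus estimate, so it cannot repair the main route either.

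The paper escapes this trap by reversing your order: boundedness first, dominance last. From $P \gg_u 0$ and \cite[Corollary~3.3]{Daners2016a} the value $\spb(A)=0$ is a \emph{first-order} pole, and \cite[Theorem~7.7(ii)]{Daners2016} then upgrades this to: every peripheral spectral value $i\beta_k$ is a first-order pole. Writing $Q$ for the spectral projection of the whole finite peripheral spectrum (not just $P$), one gets $QE = \oplus_{k} \ker(i\beta_k I - A)$, so the semigroup acts on $QE$ as a direct sum of rotation groups and is bounded there --- no dominance required. On $\ker Q$, eventual norm continuity gives the spectral gap and operator-norm decay via \cite[Theorem~II.4.18]{Engel2000} and \cite[Corollary~IV.3.11]{Engel2000}, just as in your $\ker P$ argument. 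Boundedness together with individual eventual positivity then makes the semigroup individually asymptotically positive, and \cite[Theorem~8.3]{Daners2016a} finally yields that $\spb(A)$ is dominant. If you want to salvage your structure, this is the ingredient to import: run your decomposition with the full peripheral projection $Q$, which your hypotheses control through the first-order-pole property, rather than with $P$ alone.
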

\begin{proof}
  We may assume that $\spb(A) = 0$. First recall from
  \cite[Theorem~7.6]{Daners2016} that $\spb(A)$ is a spectral value of
  $A$. It follows from Theorem~\ref{thm:perron-frobenius-for-semigroup}
  that $P \gg_u 0$. Hence, $\spb(A)$ is a first order pole of
  $\Res(\phdot,A)$ according to \cite[Corollary~3.3]{Daners2016a}, and
  this in turn implies that $\spec_{\per}(A)$ conists of first order
  poles of the resolvent; see \cite[Theorem~7.7(ii)]{Daners2016}.

  Now, let $\spec_{\per}(A) = \{i\beta_1,...,i\beta_n\}$ and denote by
  $Q \in \calL(E)$ the spectral projection of $A$ associated with
  $\spec_{\per}(A)$. Since $\spec_{\per}(A)$ consists of first order
  poles of the resolvent, we have $QE = \oplus_{k=1}^n \ker(i\beta_kI-
  A)$. Hence, the semigroup $(e^{tA})_{t \ge 0}$ is bounded on the range
  of $Q$. On the other hand, since the semigroup is eventually norm
  continuous and since $\spec_{\per}(A)$ is isolated from the rest of
  spectrum by assumption, it follows from
  \cite[Theorem~II.4.18]{Engel2000} that the spectral bound of
  $A|_{\ker Q}$ fulfils
  $\spb(A|_{\ker Q})<0$. Using again that our semigroup is eventually
  norm continuous, we conclude from \cite[Corollary~IV.3.11]{Engel2000}
  that $e^{tA}\to 0$ on $\ker Q$ with respect to the operator norm 
  as $t\to\infty$. Hence, $(e^{tA})_{t\ge 0}$ is indeed bounded as claimed.

  Finally, the boundedness and the individual eventual positivity of
  $(e^{tA})_{t \ge 0}$ imply immediately that this semigroup is
  \emph{individually asymptotically positive}, see
  \cite[Definition~8.1]{Daners2016a}. Hence, it follows from
  \cite[Theorem~8.3]{Daners2016a} that $\spb(A)$ is a dominant spectral
  value of $A$.
\end{proof}

\bibliography{}

\end{document}